 \theoremstyle{plain}
\newtheorem{theorem}{Theorem}[section]
\newtheorem{lemma}[theorem]{Lemma}
\newtheorem{corollary}[theorem]{Corollary}
\newtheorem{prop}[theorem]{Proposition}
\newtheorem{proposition}[theorem]{Proposition}
\theoremstyle{definition}
\newtheorem{definition}[theorem]{Definition}
\newtheorem{example}[theorem]{Example}
\newtheorem{remark}[theorem]{Remark}
\let\phi\varphi
\let\epsilon\varepsilon
\newcommand{\norm}[1]{\Vert#1\Vert}
\newcommand{\bignorm}[1]{\bigl\Vert#1\bigr\Vert}
\newcommand{\Bignorm}[1]{\Bigl\Vert#1\Bigr\Vert}
\newcommand{\A}{\mbox{${\mathcal A}$}}
   \newcommand{\E}{\mbox{${\mathcal E}$}}
   \newcommand{\F}{\mbox{${\mathcal F}$}}
   \renewcommand{\H}{\mbox{${\mathcal H}$}}
    \newcommand{\M}{\mbox{${\mathcal M}$}}
   \newcommand{\N}{\mbox{${\mathcal N}$}}
\newcommand{\Cdb}{\mbox{${\mathbb C}$}}
\begin{document}
\title{$\ell^1$-contractive maps on noncommutative $L^p$-spaces}

\author[C. Le Merdy]{Christian Le Merdy}

\address{Laboratoire de Math\'ematiques de Besan\c{c}on,
Universite Bourgogne Franche-Comt\'e, France}
\email{\texttt{clemerdy@univ-fcomte.fr}}

\author[S. Zadeh]{Safoura Zadeh}
\address{
Department of Mathematics, Federal University of Para\'{i}ba, Brazil 
\& Faculty of Graduate Studies, Dalhousie University, Canada.
}
\email{\texttt{jsafoora@gmail.com}}

\keywords{Noncommutative $L^p$-spaces, regular maps,
positive maps, isometries.}

\subjclass[2000]{Primary: 46L52; Secondary: 46B04, 47B65}

\begin{abstract}
Let $T\colon L^p(\M)\to L^p(\N)$ be a bounded operator between 
two noncommutative $L^p$-spaces, $1\leq p<\infty$. 
We say that $T$ is $\ell^1$-bounded (resp.
$\ell^1$-contractive) if $T\otimes I_{\ell^1}$ extends to a bounded 
(resp. contractive) map from 
$L^p(\M;\ell^1)$ into $L^p(\N;\ell^1)$. 
We show that Yeadon's factorization theorem for $L^p$-isometries, 
$1\leq p\not=2 <\infty$, applies to an isometry $T\colon L^2(\M)\to L^2(\N)$
if and only if $T$ is $\ell^1$-contractive. We also show that 
a contractive operator $T\colon L^p(\M)\to L^p(\N)$ is automatically
$\ell^1$-contractive if it satisfies one of the following two 
conditions: either $T$ is $2$-positive; or $T$ is separating, that is,
for any disjoint $a,b\in L^p(\M)$ (i.e. $a^*b=ab^*=0)$, the images 
$T(a),T(b)$ are disjoint as well.
\end{abstract}

\date{\today}

\maketitle

\section{Introduction}\label{sec1}
Let $\M$ and $\N$ be two semifinite von Neumann algebras. 
For any $1\leq p<\infty$, consider the associated noncommutative $L^p$-spaces
$L^p(\M)$ and $L^p(\N)$. A remarkable theorem of Yeadon \cite{Y} 
(see Theorem \ref{yeadon} below) asserts that if $p\not=2$ and $T\colon L^p(\M)\to L^p(\N)$ is a linear isometry, 
then there exist a normal Jordan homomorphism 
$J\colon \M\to\N$, a positive operator $B$ 
affiliated with $\N$ and a partial isometry $w\in\N$ such that
$w^*wB=B$, $J(a)$ commutes with $B$ for all
$a\in \M$,
and
\begin{equation}\label{eqY0}
T(a) = wBJ(a),
\end{equation}
for all $a\in \M\cap L^p(\M)$.

This striking factorization property is the noncommutative version of the celebrated description
 of isometries on classical (=commutative) $L^p$-spaces due to Banach \cite{Ban} and Lamperti \cite{Lam}. 
We refer to the books \cite{FJ1} and \cite{FJ2} for details on these results, complements  and historical background.

The work presented in this paper was originally motivated by the following question, concerning the case $p=2$:
what are the linear isometries  $T\colon L^2(\M)\to L^2(\N)$ which admit a Yeadon type factorization, that is, 
isometries for which
there exist $J,B,w$ as above such that (\ref{eqY0}) holds true for any 
$a\in \M\cap L^2(\M)$?

This issue leads us to introduce a new property, called $\ell^1$-boundedness, which is defined as follows. 
Consider the $\ell^1$-valued noncommutative $L^p$-space  $L^p(\M;\ell^1)$ introduced by Junge \cite{J} (see also  \cite{P5} and \cite{JX}).
Let $T\colon L^p(\M)\to L^p(\N)$ be a bounded operator. We say that $T$ is $\ell^1$-bounded if $T\otimes I_{\ell^1}$ extends to a bounded map 
$$
T\overline{\otimes} I_{\ell^1}\colon
L^p(\M;\ell^1)\longrightarrow L^p(\N;\ell^1).
$$
We further say that $T$ is $\ell^1$-contractive if the
map $T\overline{\otimes} I_{\ell^1}$ is a contraction.
The main result of this paper (Theorem \ref{main1} below) is that an isometry $T\colon L^2(\M)\to L^2(\N)$ is $\ell^1$-contractive if and only if it admits a Yeadon type factorization.

To explain the relevance of Theorem \ref{main1} we note that
$\ell^1$-boundedness is a noncommutative analogue of regularity for maps acting on commutative $L^p$-spaces. (We refer to \cite[Chapter 1]{P4} for definitions and background on regular maps.)
It follows that 
Theorem \ref{main1} is a noncommutative extension of the well-known result stating that a linear isometry between commutative $L^2$-spaces is a Lamperti operator if and only if it is contratively regular, if and only if it is a subpositive contraction (see e.g \cite{Pel}).

The proof of Yeadon's theorem heavily relies on the fact that for $p\not=2$, any linear isometry 
$T\colon L^p(\M)\to L^p(\N)$ has the following property: if $a,b\in L^p(M)$ are disjoint, that is $a^*b=ab^*=0$, then $T(a)$ and $T(b)$ are disjoint as well. Such maps are called separating in the present paper. We show that a bounded operator $L^p(\M)\to L^p(\N)$ is separating if and only if it admits a Yeadon type factorization.

The concept of $\ell^1$-boundedness is interesting in its own sake and this paper aims at studying some of its main features. We show in particular that a contractive operator 
$T\colon L^p(\M)\to L^p(\N)$ is automatically
$\ell^1$-contractive either if $T$ is separating (see Theorem \ref{main0}) or if $T$ is $2$-positive (see Proposition \ref{6.2}).

\section{Notion of $\ell^1$-boundedness and background} \label{sec2}

In this section, we provide some
background on noncommutative $L^p$-spaces and on the $\ell^1$-valued
spaces $L^p(\M;\ell^1)$. Then we introduce the notions
of $\ell^1$-boundedness and $\ell^1$-contractivity and establish some preliminary results.

Let $\mathcal{M}$ be a semifinite von Neumann algebra equipped with a normal semifinite faithful trace $\tau_{\tiny{\M}}$. We briefly recall the noncommutative $L^p$-spaces $L^p(\mathcal{M})$, 
$0< p\leq\infty$, associated with $(\mathcal{M},\tau_{\tiny{\M}})$ 
and some of their basic properties. 
The reader is referred to the survey \cite{PX} and references therein for details and further properties.

If $\mathcal{M}\subset B(\mathcal{H})$ acts on some Hilbert space $\mathcal{H}$,
the elements of $L^p(\mathcal{M})$ can be viewed as closed densely defined
(possibly unbounded) operators on $\mathcal{H}$. More precisely, let $\mathcal{M}^{\prime}$
denote the commutant of $\mathcal{M}$ in $B(\mathcal{H})$. A closed densely defined operator $a$ is said to be affiliated with $\mathcal{M}$ if $a$ commutes with every unitary of $\mathcal{M}^{\prime}$. An affiliated operator $a$ is called measurable (with respect to $(\mathcal{M},\tau_{\tiny{\M}})$) if there is a positive number $\lambda>0$ such that $\tau_{\tiny{\M}}(\epsilon_\lambda)<\infty$, where $\epsilon_\lambda=\chi_{[\lambda,\infty)}(\lvert a\rvert)$ is the projection associated with the indicator function of $[\lambda,\infty)$ in the Borel functional calculus of $\lvert a\rvert$. Then the set $L^0(\mathcal{M})$ of all measurable operators forms a $*$-algebra (see e.g. \cite[Chapter I]{Te} for a proof and also for the definitions 
of algebraic operations on $L^0(\mathcal{M})$). We proceed with defining $L^p(\mathcal{M})$ as a subspace of  $L^0(\mathcal{M})$. First note that for any $a\in L^0(\mathcal{M})$ and any $0<p<\infty$, the operator $\lvert a\rvert^p=(a^* a)^{\frac{p}{2}}$ belongs to $L^0(\mathcal{M})$. 
If $L^0(\mathcal{M})^+$ denotes the positive cone of $L^0(\mathcal{M})$, that is the set of all positive operators in $L^0(\mathcal{M})$, the trace $\tau_{\tiny{\M}}$ extends to a positive tracial functional on $L^0(\mathcal{M})^+$, taking values in $[0,\infty]$, also denoted by $\tau_{\tiny{\M}}$. For any $0< p<\infty$, the noncommutative $L^p$-space, $L^p(\mathcal{M})$, associated with $(\mathcal{M},\tau_{\tiny{\M}})$, is
$$
L^p(\mathcal{M}):=\bigl\{a\in L^0(\mathcal{M}):\tau_{\tiny\M}(\lvert a\rvert^p)<\infty\bigr\}.
$$
For $a\in L^p(\mathcal{M})$, let $\|a\|_p:=\tau_{\tiny\M}(\left\lvert a\rvert^p\right)^{\frac{1}{p}}$. For $1\leq p<\infty$, $\|\cdot\|_p$ defines a complete norm, and for $p<1$, a complete
$p$-norm. We let $L^\infty(\mathcal{M}):=\mathcal{M}$, equipped with its operator norm $\|\,\cdotp\|_\infty$.

For any $0<p\leq\infty$ and any $a\in L^p(\mathcal{M})$, the adjoint operator $a^*$ 
belongs to $L^p(\mathcal{M})$ and $\|a^*\|_p=\|a\|_p$. Furthermore, 
we have that $a^* a\in L^\frac{p}{2}(\mathcal{M}) $ and $\lvert a\rvert\in L^p(\mathcal{M}) $, with $\|\lvert a\rvert\|_p=\|a\|_p$. 
More generally, for any $0<p,q,r\leq\infty$ 
with $\frac{1}{p}+\frac{1}{q}=\frac{1}{r}$, we have that
$ab\in L^r(\M)$ if $a\in L^p(\M)$ and $b\in L^q(\M)$, with 
H\"{o}lder's inequality
\begin{equation}\label{Holder}
\|ab\|_r\leq\|a\|_p\|b\|_q.
\end{equation}

For any $1\leq p<\infty$, let $p^\prime:=\frac{p}{p-1}$ be the conjugate number of $p$. 
Then by (\ref{Holder}), $ab$ belongs to $L^1(\M)$
for any $a\in L^p(\mathcal{M})$ and $b\in L^{p^{\prime}}(\mathcal{M})$.
Further the duality pairing  
$$
\langle a,b\rangle=\tau_{\tiny{\M}}(ab),\qquad  a\in L^p(\mathcal{M}),\ b\in L^{p^{\prime}}(\mathcal{M}),
$$
yields an isometric isomorphism $L^{p}(\mathcal{M})^*= L^{p^{\prime}}(\mathcal{M})$.
In particular, we may identify $L^1(\mathcal{M})$ with the (unique) predual $\mathcal{M}_*$ of $\mathcal{M}$. 
These duality results will be used without further reference in the paper.

We let $L^p(\mathcal{M})^+:= L^0(\mathcal{M})^+\cap L^p(\mathcal{M})$ denote the positive cone of $L^p(\mathcal{M})$. 
A bounded operator $T\colon L^p(\mathcal{M})\to L^p(\mathcal{N})$ between
two noncommutative $L^p$-spaces is called positive if it maps   $L^p(\mathcal{M})^+$ into $L^p(\mathcal{\N})^+$ .

If $\mathcal{M}=B(\mathcal{H})$, the algebra of all bounded operators on $\mathcal{H}$, 
and $\tau_{\tiny\M}=tr$, the usual trace on $B(\mathcal{H})$, then the associated 
noncommutative $L^p$-space is the Schatten class
$S^p(\mathcal{H})$. If $\M=L^\infty(\Omega,\F,\mu)$ is the commutative
von Neumann algebra associated with a measure space
$(\Omega,\F,\mu)$, then $L^p(\M)$ coincides with the classical $L^p$-space 
$L^p(\Omega,\F,\mu)$.

Let $tr$ denote the usual trace on $B(\ell^2)$ and consider the von Neumann 
algebra tensor product $B(\ell^2)\overline{\otimes}\M$,
equipped with the normal semifinite faithful trace $tr\overline{\otimes} 
\tau_{\tiny\M}$ (see \cite[Chapter V, Proposition 2.14]{Ta}). 
Any element of $L^p(B(\ell^2)\overline{\otimes}\M)$ can be regarded as 
an infinite matrix $(b_{ij})_{i,j\geq1}$, 
with $b_{ij}\in L^p(\mathcal{M})$. We let $L^p(\M;\ell^2_c)$ denote 
the subspace of $L^p(B(\ell^2) \overline{\otimes}\M)$ consisting of all matrices 
whose entries off the first column are all zero. We 
regard this space as a sequence space by
saying that a sequence $(b_n)_{n\geq1}$ of $L^p(\M)$ belongs to $L^p(\M;\ell^2_c)$ if the infinite matrix 
$$
\begin{pmatrix}
b_1&0&\dots&0&\dots\\
\vdots&\vdots&&\vdots&\\
b_n&0&\dots&0&\dots\\
\vdots&\vdots&&\vdots&
\end{pmatrix}
$$
represents an element of $L^p(B(\ell^2)\overline{\otimes}\M)$. Similarly, we define $L^p(\M;\ell^2_r)$ as the
subspace of $L^p(B(\ell^2) \overline{\otimes}\M)$ consisting of all matrices whose entries off the first row are all zero.

We let $E_{ij}$, $i,j\geq1$, denote the usual matrix units of $B(\ell^2)$, 
and regard $S^p(\ell^2)\otimes L^p(\M)$ as a subspace of $L^p(B(\ell^2)\overline{\otimes}\M)$ 
in the usual way. For any finitely supported sequence $(a_n)_{n\geq1}$ and
$(b_n)_{n\geq1}$ of $L^p(\M)$, we have
\begin{equation}\label{SF}
\Bignorm{\sum_{n=1}^{\infty} E_{n1}\otimes 
b_n}_{L^p(\tiny{\M};\ell^2_c)}=
\Bignorm{\sum_{n=1}^{\infty}  
b_n^* b_n}_{\frac{p}{2}}^{\frac12}
\end{equation}
and
\begin{equation}\label{SFbis}
\Bignorm{\sum_{n=1}^{\infty} E_{1n}\otimes 
a_n}_{L^p(\tiny{\M};\ell^2_r)}=
\Bignorm{\sum_{n=1}^{\infty}  
a_n  a_n^*}_{\frac{p}{2}}^{\frac12}.
\end{equation}

When $1\leq p<\infty$,
elements of $L^p(\M;\ell^2_c)$ and $L^p(\M;\ell^2_r)$ can be approximated by finitely supported sequences,
thanks to the following (easy) result.

\begin{lemma}\label{2.3}
Let $1\leq p<\infty$ and suppose that $(b_n)_{n\geq1}$ is a 
sequence in $L^p(\mathcal{M})$. The following are equivalent:
\begin{itemize}
\item[(i)] $(b_n)_{n\geq1}$ belongs to $L^p(\mathcal{M};\ell_c^2)$.
\item[(ii)] There exists a positive constant $K$ such that for every 
$N\geq1$, 
$$
\Bignorm{\sum_{n=1}^N E_{n1}\otimes b_n}_{L^p(\tiny{\M};\ell^2_c)} \leq K.
$$
\item[(iii)] The series 
$\sum_{n\geq1} E_{n1} \otimes b_n$ 
converges in $L^p(\mathcal{M};\ell^2_c)$.
\end{itemize}
Moreover, the same result holds with $\ell^2_c$ 
replaced by $\ell^2_r$ and $E_{n1}$ replaced by $E_{1n}$. 
\end{lemma}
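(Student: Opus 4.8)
The plan is to prove the equivalences by first reducing to a uniform bound, then using the known formula \eqref{SF} for the column norm, and finally exploiting that $L^{p/2}(\M)$ behaves like an ordered Banach space (for $p\geq 1$) in which increasing norm-bounded sequences of positive elements converge. First I would observe that (iii) $\Rightarrow$ (i) is immediate from the definition of $L^p(\M;\ell^2_c)$ as a subspace of $L^p(B(\ell^2)\overline{\otimes}\M)$, since a convergent series has its sum in that space, and (i) $\Rightarrow$ (ii) follows because the truncations $\sum_{n=1}^N E_{n1}\otimes b_n$ are the images of $(b_n)_n$ under the (contractive) compression by the diagonal projection $\sum_{n=1}^N E_{nn}\otimes 1$, so their norms are bounded by $K=\bignorm{(b_n)_n}_{L^p(\M;\ell^2_c)}$. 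The substance is in (ii) $\Rightarrow$ (iii).

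For (ii) $\Rightarrow$ (iii), I would use \eqref{SF}: the hypothesis says $\bignorm{\sum_{n=1}^N b_n^*b_n}_{p/2}\leq K^2$ for all $N$. Set $s_N=\sum_{n=1}^N b_n^*b_n\in L^{p/2}(\M)^+$. This is an increasing sequence (in the order of $L^0(\M)^+$) which is norm-bounded in $L^{p/2}(\M)$. When $p/2\geq 1$, i.e. $p\geq 2$, the space $L^{p/2}(\M)$ is a Banach space whose positive cone is closed and in which bounded increasing sequences converge in norm to their supremum $s\in L^{p/2}(\M)^+$; then for $M<N$ one has $\bignorm{\sum_{n=M+1}^N E_{n1}\otimes b_n}^2 = \bignorm{s_N-s_M}_{p/2}\to 0$, so the series is Cauchy, hence converges, proving (iii). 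For $1\leq p<2$ one cannot invoke completeness of $L^{p/2}$ as a Banach space directly, but $L^{p/2}(\M)$ is still a complete $p/2$-normed space, and the key monotone-convergence fact for traces still holds: if $s_N\uparrow s$ in $L^0(\M)^+$ with $\sup_N\tau_{\M}(s_N)<\infty$ (taking $p=2$) or more generally $\sup_N\tau_\M(s_N^{p/2})<\infty$, then $s\in L^{p/2}(\M)$ and $\tau_\M((s-s_N)^{p/2})\to 0$ by the dominated/monotone convergence theorem for the trace (see e.g. \cite{Te}); again this gives Cauchyness of the series in the $p/2$-norm and hence in the $p$-norm after taking square roots, using the elementary inequality controlling $\norm{x-y}_p$ by $\norm{x^*x-y^*y}_{p/2}^{1/2}$-type estimates — more cleanly, one notes directly that $\bignorm{\sum_{M+1}^N E_{n1}\otimes b_n}_{L^p(\M;\ell^2_c)}=\tau_\M\big((s_N-s_M)^{p/2}\big)^{1/p}\to 0$.

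Finally, the row case follows by applying the column case to the adjoints: the map $x\mapsto x^*$ is an isometric conjugate-linear bijection of $L^p(B(\ell^2)\overline{\otimes}\M)$ sending $L^p(\M;\ell^2_r)$ onto $L^p(\M;\ell^2_c)$ and $E_{1n}\otimes a_n$ to $E_{n1}\otimes a_n^*$, and it transforms \eqref{SFbis} into \eqref{SF}; so $(a_n)_n\in L^p(\M;\ell^2_r)$ iff $(a_n^*)_n\in L^p(\M;\ell^2_c)$ and the three conditions transfer verbatim.

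The main obstacle is the monotone-convergence step underlying (ii) $\Rightarrow$ (iii): one must be careful that the increasing sequence $s_N$ of positive measurable operators has a supremum in $L^{p/2}(\M)$ and converges there in ($p/2$-)norm, rather than merely in the measure topology. For $p\geq 2$ this is the standard fact that order-bounded increasing sequences converge in a noncommutative $L^q$-space with $q\geq 1$; for $1\leq p<2$ it rests on the trace version of the monotone convergence theorem, which is available in the references (\cite{Te, PX}) and which the paper is entitled to quote as "easy".
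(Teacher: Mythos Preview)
Your argument is correct; the paper omits the proof of this lemma entirely, labeling it ``easy'', so there is nothing to compare against. The monotone-convergence route you take for (ii)$\Rightarrow$(iii) --- showing that $s_N=\sum_{n=1}^N b_n^*b_n$ is Cauchy in $L^{p/2}(\M)$ via the noncommutative Fatou and dominated convergence theorems --- is the standard one, and your case split between $p\geq 2$ and $1\leq p<2$ can in fact be collapsed, since dominated convergence in $L^{q}(\M)$ holds for all $0<q<\infty$ once the supremum $s=\sup_N s_N$ is known to lie in $L^{p/2}(\M)$.
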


\begin{remark}\label{rem}
By (\ref{SF}) and the Cauchy convergence test, we see
that a sequence $(b_n)_{n\geq 1}$ in $L^p(\mathcal{M})$ satisfies
the assertion (iii) of Lemma \ref{2.3} if and only if
the series 
$\sum_n b_n^*b_n$ converges in $L^{\frac{p}{2}}(\M)$. In this case, 
the identity (\ref{SF}) holds true for $(b_n)_{n\geq 1}$.
\end{remark}

Let $1\leq p<\infty$.
In \cite{J}, Junge defined $L^p(\mathcal{M};\ell^1)$ as the space of all sequences 
$x=(x_n)_{n\geq1}$ in $L^p(\mathcal{M})$ for which there 
exist families $(u_{kn})_{k,n\geq 1}$,
$(v_{kn})_{k,n\geq 1}\in L^{2p}(\mathcal{M})$ and a positive constant $K$ such that 
\begin{equation}\label{eqnew}
\Bignorm{\sum_{k,n=1}^N u_{kn}u_{kn}^{*}}_p \leq K
\qquad\hbox{and}\qquad
\Bignorm{\sum_{k,n=1}^N v^*_{kn}v_{kn}}_p\leq K 
\end{equation}
for any $N\geq1$, and $\sum_{k =0}^{\infty}
u_{kn}v_{kn}=x_n$,
for all $n\geq 1$. (The convergence of the series is ensured
by (\ref{eqnew}) and Lemma \ref{2.3}.)
He showed that this a Banach space when
equipped with the norm 
$$
\bignorm{(x_n)_{n\geq 1}}_{L^p(\mathcal{M};\ell^1)}=
\inf\left\{\sup_N
\Bignorm{\sum_{k,n=1}^N u_{kn}u_{kn}^*}_p^{\frac12} 
\sup_N\Bignorm{\sum_{k,n=1}^N v^*_{kn}v_{kn}}_p^{\frac12}\right\},
$$
where the infimum is taken over all families 
$(u_{kn})_{k,n\geq 1}$ and
$(v_{kn})_{k,n\geq 1}$ as above.

The following alternative description is well-known to
specialists (and implicit in \cite[pp. 537-538]{P1}).
We give a proof for the sake of completeness.

\begin{lemma}\label{2.6}
Suppose that $1\leq p<\infty$ and that $(x_n)_{n\geq1}$ is a sequence 
in $L^p(\mathcal{M})$. Then the following are equivalent:
\begin{itemize}
\item[(i)] $(x_n)_{n\geq1}$ belongs to $L^p(\mathcal{M};\ell^1)$
and $\norm{(x_n)_{n\geq 1}}_{L^p(\mathcal{M};\ell^1)}<1$.
\item[(ii)] There exist sequences $(a_n)_{n\geq1}$ and $(b_n)_{n\geq1}$ 
in $L^{2p}(\mathcal{M})$ such that $a_nb_n=x_n$ for all $n\geq 1$,
the series $\sum_n a_n  a_n^*$ and $\sum_n b_n^* b_n$
converge in $L^p(\M)$, and we have
$$
\Bignorm{\sum_{n=1}^\infty a_n  a_n^*}_p <1
\qquad\hbox{and}\qquad
\Bignorm{\sum_{n=1}^\infty b_n^* b_n}_p< 1.
$$
\end{itemize}
\end{lemma}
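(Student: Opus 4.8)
The plan is to show that the two-sided description (ii), using square-summable sequences of the special form $a_n b_n$, is equivalent to Junge's original definition (i) based on the double-indexed families $(u_{kn})$, $(v_{kn})$. The implication (ii)$\Rightarrow$(i) should be immediate: given $(a_n),(b_n)$ as in (ii), one simply sets $u_{kn}=\delta_{k1}a_n$ and $v_{kn}=\delta_{k1}b_n$, so that the double sums $\sum_{k,n} u_{kn}u_{kn}^*$ and $\sum_{k,n}v_{kn}^*v_{kn}$ collapse to the single sums $\sum_n a_na_n^*$ and $\sum_n b_n^*b_n$; the convergence is guaranteed by Remark \ref{rem}, and the norm estimate (\ref{eqnew}) holds with $K<1$, so $\norm{(x_n)}_{L^p(\M;\ell^1)}<1$. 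One minor point to record here is that Lemma \ref{2.3}/Remark \ref{rem} legitimately applies to the (single-indexed re-enumeration of the) family $(a_n)$ — i.e. $\sum_n a_na_n^*$ converging in $L^p(\M)$ is exactly the hypothesis in (ii).

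For the harder direction (i)$\Rightarrow$(ii): start from families $(u_{kn})$, $(v_{kn})$ in $L^{2p}(\M)$ with
$\sup_N\norm{\sum_{k,n=1}^N u_{kn}u_{kn}^*}_p^{1/2}\,\sup_N\norm{\sum_{k,n=1}^N v_{kn}^*v_{kn}}_p^{1/2}<1$
and $\sum_k u_{kn}v_{kn}=x_n$. After a harmless rescaling ($u_{kn}\mapsto\lambda u_{kn}$, $v_{kn}\mapsto\lambda^{-1}v_{kn}$) we may assume both suprema are separately $<1$. Now re-index the double sequence by a single index: pick a bijection $\mathbb{N}\to\mathbb{N}\times\mathbb{N}$, $m\mapsto(k(m),n(m))$, and think of $u=(u_{k(m)n(m)})_m$ as lying in $L^p(\M;\ell^2_r)$ and $v=(v_{k(m)n(m)})_m$ as lying in $L^p(\M;\ell^2_c)$, with norms $<1$ by (\ref{SFbis}), (\ref{SF}) and Lemma \ref{2.3}. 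The key idea is then to regroup: the map $x_n=\sum_k u_{kn}v_{kn}$ is the ``diagonal block'' product of a row vector with a column vector, and we want to rewrite it as a genuine single product $a_nb_n$ with $a_n\in L^{2p}(\M)$, $b_n\in L^{2p}(\M)$.

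The device I expect to use is to move to the matrix level: form $a_n:=\sum_k E_{1k}\otimes u_{kn}\in L^{2p}(B(\ell^2)\bar\otimes\M)$ and $b_n:=\sum_k E_{k1}\otimes v_{kn}\in L^{2p}(B(\ell^2)\bar\otimes\M)$, so that $a_nb_n=E_{11}\otimes x_n$; replacing $\M$ by $B(\ell^2)\bar\otimes\M$ (which changes nothing, since $L^p(\M;\ell^1)$ sits isometrically inside $L^p(B(\ell^2)\bar\otimes\M;\ell^1)$ via $x_n\mapsto E_{11}\otimes x_n$) reduces (ii) to exactly the situation where the ``$a_n b_n$'' factorization is available. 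Concretely one checks $\sum_n a_na_n^* = \sum_{k,n} E_{11}\otimes u_{kn}u_{kn}^*$ has $L^p$-norm $<1$ and similarly $\sum_n b_n^*b_n = \sum_{k,n}E_{11}\otimes v_{kn}^*v_{kn}$ has $L^p$-norm $<1$, with convergence from Remark \ref{rem}; and $a_nb_n = E_{11}\otimes\sum_k u_{kn}v_{kn} = E_{11}\otimes x_n$. Thus $(E_{11}\otimes x_n)_n$ admits a factorization of the type in (ii) inside $L^p(B(\ell^2)\bar\otimes\M)$; pulling back along the isometric embedding $L^p(\M)\hookrightarrow L^p(B(\ell^2)\bar\otimes\M)$, $x\mapsto E_{11}\otimes x$, which is a corner and hence respects products and the relevant square-function norms, yields $(a_n),(b_n)$ in $L^{2p}(\M)$ with $a_nb_n=x_n$ and the two strict norm bounds. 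The main obstacle, and the one deserving the most care, is precisely this last pull-back step: one must verify that a factorization living in the big algebra can be transported back to $\M$ itself — i.e. that if $E_{11}\otimes x_n = \alpha_n\beta_n$ with the square-function bounds, one can arrange $\alpha_n,\beta_n$ of the form $E_{11}\otimes a_n$, $E_{11}\otimes b_n$. This is handled by compressing with the projection $E_{11}\otimes 1$: replace $\alpha_n$ by $\alpha_n(E_{11}\otimes 1)$ and $\beta_n$ by $(E_{11}\otimes 1)\beta_n$, which leaves the product $E_{11}\otimes x_n$ unchanged and only decreases $\sum_n\alpha_n\alpha_n^*$ and $\sum_n\beta_n^*\beta_n$ (in the positive cone, hence in $L^p$-norm), so the strict bounds survive; the compressed elements are supported in the $(1,1)$-corner and are therefore of the required form.
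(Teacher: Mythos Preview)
Your treatment of (ii)$\Rightarrow$(i) is fine, and your setup for (i)$\Rightarrow$(ii) --- forming the row $\alpha_n=\sum_k E_{1k}\otimes u_{kn}$ and the column $\beta_n=\sum_k E_{k1}\otimes v_{kn}$ in $L^{2p}(B(\ell^2)\overline{\otimes}\M)$ so that $\alpha_n\beta_n=E_{11}\otimes x_n$ --- is exactly what the paper does. The gap is in your ``pull-back'' step.

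You propose to replace $\alpha_n$ by $\alpha_n(E_{11}\otimes 1)$ and $\beta_n$ by $(E_{11}\otimes 1)\beta_n$, claiming this leaves the product unchanged. It does not: the new product is $\alpha_n(E_{11}\otimes 1)\beta_n$, which in general differs from $\alpha_n\beta_n$. In your concrete situation, $\alpha_n(E_{11}\otimes 1)=E_{11}\otimes u_{1n}$ and $(E_{11}\otimes 1)\beta_n=E_{11}\otimes v_{1n}$, so the compressed product is $E_{11}\otimes u_{1n}v_{1n}$, not $E_{11}\otimes x_n$. Compressing on the outside instead (i.e.\ using $(E_{11}\otimes 1)\alpha_n$ and $\beta_n(E_{11}\otimes 1)$) does preserve the product, but then the factors remain a row and a column and are not of the form $E_{11}\otimes(\,\cdot\,)$. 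There is no projection trick that simultaneously forces both factors into the $(1,1)$ corner and keeps the middle of the product intact; the information carried by the off-diagonal entries of $\alpha_n$ and $\beta_n$ is essential to recovering $x_n$.

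The paper closes this gap with a polar-decomposition argument rather than a compression. Write $\alpha_n=\lvert\alpha_n^*\rvert\,\phi_n$ and $\beta_n=\psi_n\,\lvert\beta_n\rvert$ with $\phi_n\in L^\infty(\M;\ell^2_r)$, $\psi_n\in L^\infty(\M;\ell^2_c)$ partial isometries. The point is that $\alpha_n\alpha_n^*=\sum_k u_{kn}u_{kn}^*$ and $\beta_n^*\beta_n=\sum_k v_{kn}^*v_{kn}$ already live in the $(1,1)$ corner, so $a_n:=\lvert\alpha_n^*\rvert$ and $\lvert\beta_n\rvert$ are genuinely in $L^{2p}(\M)$; moreover $\phi_n\psi_n$ is a row times a column, hence also lands in the corner, so $b_n:=\phi_n\psi_n\lvert\beta_n\rvert\in L^{2p}(\M)$. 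Then $a_nb_n=x_n$, $a_na_n^*=\sum_k u_{kn}u_{kn}^*$, and $b_n^*b_n\leq \lvert\beta_n\rvert^2=\sum_k v_{kn}^*v_{kn}$, giving the required strict bounds. This is the missing idea in your proposal.
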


\begin{proof}
The assertion ``(ii) $\Rightarrow$ (i)" is obvious. 
Conversely assume
(i) and  consider $u_{kn}, v_{kn}$
in $L^{2p}(\M)$ 
satisfying (\ref{eqnew}) for some $K<1$, and 
such that 
$$
x_n=\sum_{k=1}^{\infty}u_{kn}v_{kn},\qquad\text{for all}\ n\geq 1.
$$

We regard $L^p(\M)$ as a subspace of 
$L^p(B(\ell^2)\overline{\otimes}\M)$ by identifying
any $b\in L^p(\M)$ with $E_{11}\otimes b$.
We set
$$
u_n:=(u_{kn})_{k\geq1}\in L^{2p}(\mathcal{M};\ell^2_r)
\qquad\hbox{and}\qquad 
v_n:=(v_{kn})_{k\geq1}\in L^{2p}(\mathcal{M};\ell^2_c)
$$
for all $n\geq 1$, that we regard as elements of
$L^{2p}(B(\ell^2)\overline{\otimes}\M)$. Then 
$x_n= u_nv_n$ for all $n\geq 1$.

By polar decomposition, 
there exist $\varphi_n\in L^\infty(\mathcal{M};\ell^2_r)$ and 
$\psi_n\in L^\infty(\mathcal{M};\ell^2_c)$ such that 
$$
\norm{\phi_n}_\infty\leq 1,\quad
\norm{\psi_n}_\infty\leq 1,\quad
u_n= \vert u_n^*\vert\varphi_n
\quad\hbox{and}\quad
v_n=\psi_n\lvert v_n\rvert.
$$
If we let $a_n=\lvert u_n^*\rvert$ and 
$b_n=\phi_n\psi_n\lvert v_n\rvert$, then we have 
$$
x_n=u_nv_n
=\lvert u_n^*\rvert\phi_n\psi_n\lvert v_n\rvert 
=a_nb_n
$$
for all $n\geq1$. Further $a_n,b_n$ belong to $L^{2p}(\M)$.
Next we have
$$
a_n a_n^* =\lvert u_n^*\rvert^2=u_n u_n^*=\sum_{k=1}^\infty u_{kn} u_{kn}^*,
$$
hence for any $N\geq 1$, $\norm{\sum_{n=1}^N a_na_n^*}_p\leq K$.
By Lemma \ref{2.3} and Remark \ref{rem}, this implies the 
convergence of the series of the $a_na_n^*$ in $L^p(\M)$, with 
$\bignorm{\sum_{n=1}^\infty a_na_n^*}_p\leq K$.
Likewise, since $(\phi_n\psi_n)^* (\phi_n\psi_n)\leq1 $, we have
\begin{align*}
b_n^* b_n&=\lvert v_n\rvert (\phi_n\psi_n)^* (\phi_n\psi_n) \lvert v_n\rvert\\
&\leq\lvert v_n\rvert^2=v_n^\ast v_n=
\sum_{k=1}^\infty v_{kn} ^\ast v_{kn},
\end{align*}
from which we deduce that the series of the $b_n^*b_n$ 
converges in $L^p(\M)$, with 
$\bignorm{\sum_{n=1}^\infty b_n^*b_n}_p\leq K$.
This proves (ii).
\end{proof}

When dealing with positive sequences, the study of the $L^p(\M,\ell^1)$-norm is simple.
We learnt the following result from \cite{Xu}.

\begin{lemma}\label{Xu}
Let $1\leq p<\infty$, let $(x_n)_{n\geq 1}$ be a sequence 
of $L^p(\M)$ and assume that $x_n\geq 0$ for any $n\geq 1$.
The following are equivalent.
\begin{itemize}
\item [(i)] $(x_n)_{n\geq 1}$ belongs to $L^p(\M;\ell^1)$.
\item [(ii)] The series $\sum_n x_n\,$ converges
in $L^p(\M)$.
\end{itemize}
Further in this case, we have
\begin{equation}\label{Xu1}
\bignorm{(x_n)_{n\geq 1}}_{L^p(\tiny\M;\ell^1)}\, 
=\, \Bignorm{\sum_{n=1}^{\infty} x_n}_p.
\end{equation}
\end{lemma}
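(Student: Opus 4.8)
The plan is to prove both implications directly using the Hölder-type factorizations from Lemma \ref{2.6}, exploiting positivity to make everything explicit.

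For the implication (ii) $\Rightarrow$ (i), suppose the series $\sum_n x_n$ converges in $L^p(\M)$ to some $s\in L^p(\M)^+$. Since $x_n\geq 0$ we may write $x_n = x_n^{1/2}x_n^{1/2}$, with $x_n^{1/2}\in L^{2p}(\M)$. Take $a_n = b_n = x_n^{1/2}$ in Lemma \ref{2.6}(ii); then $a_na_n^* = b_n^*b_n = x_n$, and $\sum_n a_na_n^* = \sum_n b_n^*b_n = s$ converges in $L^p(\M)$ with norm $\norm{s}_p$. Hence $(x_n)_{n\geq 1}\in L^p(\M;\ell^1)$ and, by the definition of the norm (or by the scaled version of Lemma \ref{2.6}), $\norm{(x_n)_{n\geq1}}_{L^p(\M;\ell^1)}\leq \norm{\sum_n x_n}_p$. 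This gives the inequality ``$\leq$'' in (\ref{Xu1}).

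For (i) $\Rightarrow$ (ii) together with the reverse inequality, suppose $(x_n)_{n\geq 1}\in L^p(\M;\ell^1)$ with norm strictly less than some $c>0$; after rescaling we may assume $c=1$. By Lemma \ref{2.6}(ii) there are $a_n,b_n\in L^{2p}(\M)$ with $a_nb_n = x_n$, and $\norm{\sum_n a_na_n^*}_p < 1$ and $\norm{\sum_n b_n^*b_n}_p < 1$, both series converging in $L^p(\M)$. Fix $N\geq 1$. The key computation is to bound $\norm{\sum_{n=1}^N x_n}_p$ using duality: pick $y\in L^{p'}(\M)^+$ with $\norm{y}_{p'}\leq 1$ and estimate $\tau_{\M}\bigl(y\sum_{n=1}^N x_n\bigr) = \sum_{n=1}^N \tau_{\M}(y^{1/2}a_nb_ny^{1/2})$. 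Applying the Cauchy--Schwarz inequality for the trace to each term, $\abs{\tau_{\M}(y^{1/2}a_n\cdot b_ny^{1/2})} \leq \tau_{\M}(y^{1/2}a_na_n^*y^{1/2})^{1/2}\,\tau_{\M}(y^{1/2}b_n^*b_ny^{1/2})^{1/2}$, and then Cauchy--Schwarz on the sum over $n$ yields $\sum_{n=1}^N \leq \bigl(\sum_n \tau_{\M}(y^{1/2}a_na_n^*y^{1/2})\bigr)^{1/2}\bigl(\sum_n\tau_{\M}(y^{1/2}b_n^*b_n y^{1/2})\bigr)^{1/2} = \tau_{\M}\bigl(y\sum_n a_na_n^*\bigr)^{1/2}\tau_{\M}\bigl(y\sum_n b_n^*b_n\bigr)^{1/2}$, which by Hölder is at most $\norm{\sum_n a_na_n^*}_p^{1/2}\norm{\sum_n b_n^*b_n}_p^{1/2} < 1$. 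Taking the supremum over $y$ gives $\norm{\sum_{n=1}^N x_n}_p < 1$ uniformly in $N$; since the $x_n$ are positive the partial sums are increasing, so $\sum_n x_n$ converges in $L^p(\M)$ with $\norm{\sum_n x_n}_p\leq \norm{(x_n)_{n\geq1}}_{L^p(\M;\ell^1)}$. Combined with the previous paragraph, this proves both (ii) and the equality (\ref{Xu1}).

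The main obstacle is the justification in the last step that uniform boundedness of the increasing partial sums $s_N = \sum_{n=1}^N x_n$ in $L^p(\M)$ forces norm convergence. For $p=1$ this is the statement that a bounded increasing net in $L^1(\M)^+$ converges in norm, which follows from normality of the trace; for $1<p<\infty$ one can reduce to it by noting $s_N\uparrow s$ in the measure topology to a (possibly unbounded) positive operator $s$ affiliated with $\M$, observing $\tau_{\M}(s^p) = \sup_N \tau_{\M}(s_N^p) < \infty$ by normality applied to the spectral projections, hence $s\in L^p(\M)$, and finally invoking dominated convergence ($0\leq s - s_N \leq s$) in $L^p(\M)$. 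I would state this monotone convergence fact for noncommutative $L^p$-spaces as a known result, citing \cite{PX} or \cite{Te}, rather than reproving it. A secondary point needing a word of care is that Cauchy--Schwarz for the trace, $\abs{\tau_{\M}(xy)}\leq\tau_{\M}(xx^*)^{1/2}\tau_{\M}(y^*y)^{1/2}$, is being applied to elements of $L^1(\M)$ rather than $L^2(\M)$; here one writes $x = y^{1/2}a_n$, $y = b_ny^{1/2}$ and checks via Hölder that all intermediate quantities lie in the correct $L^r$-spaces, which is routine.
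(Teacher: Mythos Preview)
Your proof is correct. For (ii)$\Rightarrow$(i) you do exactly what the paper does: factor $x_n = x_n^{1/2}\cdot x_n^{1/2}$.

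For (i)$\Rightarrow$(ii) and the inequality $\geq$ in (\ref{Xu1}), the paper takes a different and shorter route. It observes that for finitely supported $(a_n),(b_n)$ in $L^{2p}(\M)$ one has
\[
\Bignorm{\sum_n a_n b_n}_p \leq \Bignorm{\sum_n a_n a_n^*}_p^{1/2}\Bignorm{\sum_n b_n^* b_n}_p^{1/2},
\]
which follows immediately from (\ref{SF}), (\ref{SFbis}) and H\"older in $L^p(B(\ell^2)\overline{\otimes}\M)$: the left side is $\norm{RC}_p$ for the row element $R=\sum E_{1n}\otimes a_n$ and the column element $C=\sum E_{n1}\otimes b_n$. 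Applying this inequality to tail sums $\sum_{n=M}^N a_n b_n$ gives the Cauchy criterion directly, so $\sum_n x_n$ converges and the norm bound follows; no positivity is needed for this direction. Your duality argument with trace Cauchy--Schwarz against $y\in L^{p'}(\M)^+$ is a valid alternative derivation of the same inequality on partial sums, but you then invoke noncommutative monotone convergence, which is correct yet heavier than necessary. You could bypass that step entirely by running your duality estimate on $\sum_{n=M}^N x_n$ instead of $\sum_{n=1}^N x_n$: the right-hand side becomes $\bignorm{\sum_{n=M}^N a_n a_n^*}_p^{1/2}\bignorm{\sum_{n=M}^N b_n^* b_n}_p^{1/2}$, which tends to $0$, giving Cauchy convergence without any appeal to monotone limits. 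The paper's route buys brevity and a positivity-free statement; your route is more self-contained but slightly longer as written.
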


\begin{proof}
It follows from (\ref{SF}) and (\ref{SFbis})
that for any finitely supported families $(a_n)_{n\geq 1}$
and $(b_n)_{n\geq 1}$ in $L^{2p}(\M)$, we have
$$
\Bignorm{\sum_{n=1}^{\infty} a_n b_n}_p
\leq \Bignorm{\sum_{n=1}^{\infty} a_n a_n^*}_p^{\frac12}
\Bignorm{\sum_{n=1}^{\infty} b_n^*b_n}_p^{\frac12}.
$$
The assertion ``$(i) \Rightarrow (ii)$" and the inequality
$\geq$ in (\ref{Xu1}) follow at once (here we do not need
any positivity 
assumption on the $x_n$).

Assume conversely
that the series $\sum_n x_n\,$ converges
in $L^p(\M)$ and set $a_n=b_n=x_n^{\frac12}$. 
Then the convergence of $\sum_n a_na_n^*$
and  $\sum_n b_n^* b_n$ are trivial and
$x_n=a_nb_n$ for all $n\geq 1$. This 
implies (i), as well as the inequality
$\leq$ in (\ref{Xu1}).
\end{proof}

We remark that for any $c=(c_n)_{n\geq 1}\in\ell^1$
and any $x\in L^p(\M)$, the sequence $(c_nx)_{n\geq 1}$
belongs to $L^p(\M;\ell^1)$. Further the mapping $x\otimes c\mapsto 
(c_nx)_{n\geq 1}$ extends to an embedding 
$$
L^p(\mathcal{M})\otimes \ell^1\subset L^p(\mathcal{M};\ell^1)
$$
and with this convention, $L^p(\mathcal{M})\otimes \ell^1$ is a dense subspace of $L^p(\mathcal{M};\ell^1)$.

Let $(e_k)_{k\geq 1}$ denote the canonical basis of $\ell^1$. Then 
we let $L^p(\M;\ell^1_2)$ be the direct sum $L^p(\M)\oplus L^p(\M)$ 
equipped with the norm $\norm{(x,y)}
=\norm{x\otimes e_1 + y\otimes e_2}_{L^p(\tiny{\M};\ell^1)}$,
for any
$x,y\in L^p(\M)$. 

Throughout the paper we will consider two semifinite
von Neumann algebras $\M$ and $\N$ equipped with 
normal semifinite faithful traces $\tau_{\tiny{\M}}$
and $\tau_{\tiny{\N}}$, respectively, and we will consider various 
bounded operators $L^p(\M)\to L^p(\N)$, for $1\leq p<\infty$.

\begin{definition}
We say that a  bounded operator $T\colon 
L^p(\mathcal{M})\to L^p(\mathcal{N})$ is 
\begin{enumerate}[(i)]

\item $\ell^1$-bounded if 
$T\otimes I_{\ell^1}$ extends to a bounded map 
$$
T\overline{\otimes}I_{\ell^1}\colon L^p(\mathcal{M};\ell^1)
\longrightarrow L^p(\mathcal{N};\ell^1).
$$
In this case, the norm of $T\overline{\otimes}I_{\ell^1}$ is called 
the $\ell^1$-bounded norm of $T$ and is denoted by $\norm{T}_{\ell^1}$;

\item $\ell^1$-contractive if it is $\ell^1$-bounded and $\norm{T}_{\ell^1}
\leq 1$;

\item $\ell^1_2$-contractive if for every $x,y\in L^p(\M)$, we have
$$
\|\left(T(x),T(y)\right)\|_{L^p(\tiny{\N};\ell^1_2)}
\leq\|(x,y)\|_{L^p\left(\tiny{\M};\ell^1_2\right)}.
$$
\end{enumerate}
\end{definition}

\begin{remark}
In the case $p=1$, we note that
$L^1(\M;\ell^1) \simeq\ell^1(L^1(\M))$ isometrically. This implies that any bounded 
operator $T\colon
L^1(\M)\to L^1(\N)$ is automatically 
$\ell^1$-bounded, with $\|T\|_{\ell^1}=\|T\|$.
\end{remark}

In the rest of this section, 
we compare $\ell^1$-boundedness with Pisier's notion
of complete regularity. 
Let us recall that for a hyperfinite von Neumann algebra $\M$ and an operator space $E$, 
Pisier \cite[Chapter 3]{P5} introduced a vector valued noncommutative $L^p$-space $L^p(\M)[E]$.
Let ${\rm Max}(\ell^1)$ be $\ell^1$ equipped with its so-called maximal 
operator space structure (see e.g. \cite[Chapter 3]{P3}). 
It turns out that 
\begin{equation}\label{Max}
L^p(M;\ell^1) \simeq L^p(M)[{\rm Max}(\ell^1)]\qquad \hbox{isometrically},
\end{equation}
when $\M$ is hyperfinite (see \cite{J,JX}).

Assume that the semifinite von Neumann algebras
$\mathcal{M}, \mathcal{N}$ are both hyperfinite.
Let $T\colon
L^p(\mathcal{M})\to L^p(\mathcal{N})$ be a bounded operator. Following Pisier \cite{P2}, $T$ is 
called completely regular if there 
exists a constant $K\geq 0$ such that for any $n\geq 1$,
$$
\bignorm{T\otimes I_{M_n}\colon L^p(\mathcal{M})[M_n]\longrightarrow  L^p(\mathcal{N})[M_n]}\,\leq K.
$$
In this case the least possible $K$ is denoted by $\norm{T}_{reg}$ and is called the completely regular
norm of $T$.
It is noticed in \cite{P2} that if $T$ is 
completely regular, then for any operator space 
$E$, $T\otimes I_E$ (uniquely) extends to a bounded operator $T\overline{\otimes} I_E$
from $L^p(\mathcal{M})[E]$ into $L^p(\mathcal{N})[E]$, with 
$$
\bignorm{T\overline{\otimes} I_E\colon L^p(\mathcal{M})[E]\longrightarrow  
L^p(\mathcal{N})[E]}\,\leq \norm{T}_{reg}.
$$
Combining this fact with (\ref{Max}), we obtain  that
if $T\colon L^p(\mathcal{M})\to L^p(\mathcal{N})$ is completely regular, then
$T$ is $\ell^1$-bounded, with $\norm{T}_{\ell^1}\leq \norm{T}_{reg}$.

The next example shows that the converse is wrong.

\begin{example}
We consider the specific case $\mathcal{M}=B(\ell^2)$, and we let
$T\colon S^p(\ell^2)\to S^p(\ell^2)$ be the transposition map.
This map is $\ell^1$-contractive. This is an easy
fact, which is a special case of Theorem \ref{main0} below. Here is a direct argument.

Let $(x_n)_{n\geq 1}$ be in $L^p(\M);\ell^1)$
and let $(a_n)_{n\geq 1}$ and $(b_n)_{n\geq 1}$ 
be two sequences
belonging to $L^{2p}(\M;\ell^2_r)$ and $L^{2p}(\M;\ell^2_c)$,
respectively, such that $x_n=a_nb_n$ for any $n\geq 1$.
Then 
$T(x_n)= {}^t b_n {}^t a_n$ for any $n\geq 1$,
$({}^t a_n)_{n\geq 1}$ belongs to $L^{2p}(\M;\ell^2_c)$,
$({}^t b_n)_{n\geq 1}$ belongs to $L^{2p}(\M;\ell^2_r)$ and
we both have
$$
\bignorm{({}^t a_n)_{n}}_{L^{2p}(\tiny{\M};\ell^2_c)}
=\bignorm{(a_n)_{n}}_{L^{2p}(\tiny{\M};\ell^2_r)}
\quad\hbox{and}\quad
\bignorm{({}^t b_n)_{n}}_{L^{2p}(\tiny{\M};\ell^2_r)}
=\bignorm{(b_n)_{n}}_{L^{2p}(\tiny{\M};\ell^2_c)}.
$$
The result follows at once.

Let us now prove that $T$ is not completely
regular. We need a little operator space technology,
in particular we use the Haagerup tensor product $\otimes_h$,
the operator spaces $R,C$ and the interpolation spaces
$R(\theta)=[C,R]_\theta$, $0\leq \theta\leq 1$, for which
we refer to \cite{P3}.

Let $(e_k)_{k\geq 1}$ be the canonical basis of $\ell^2$.
As it is outlined in \cite[Theorem 1.1 and p.20]{P5}, for any operator space $E$,
the mapping $e_i\otimes x\otimes e_j\mapsto E_{ij}\otimes x$,
for $i,j\geq 1$ and $x\in E$, uniquely extends to an isometric 
isomorphism
\begin{equation}\label{Pisier}
S^p[E]\,\simeq\, R\bigl(\tfrac{1}{p}\bigr) \otimes_h E
\otimes_h
R\bigl(1-\tfrac{1}{p}\bigr).
\end{equation}

Assume that $T$ is completely regular and let $K=\norm{T}_{reg}$.
Apply (\ref{Pisier}) with $E=R$. For any $n\geq 1$, we have
$$
\bigl(T\otimes I_R\bigr)
\Bigl(\sum_{k=1}^n e_k\otimes e_k\otimes e_1\Bigr)
\,=\, \sum_{k=1}^n e_1\otimes e_k\otimes e_k,
$$
hence 
$$
\Bignorm{\sum_{k=1}^n e_k\otimes e_k}_{R\otimes_h
R\bigl(1-\tfrac{1}{p}\bigr)}\,\leq K\, 
\Bignorm{\sum_{k=1}^n e_k\otimes 
e_k}_{R\bigl(\frac{1}{p}\bigr)\otimes_h
R}.
$$
It follows from the calculations in \cite[Chapter 1]{P5}
(see also \cite{JLM}) that 
$$
R\bigl(\tfrac{1}{p}\bigr)\otimes_h
R\,\simeq\, S^{2p}
\qquad\hbox{and}\qquad
R\otimes_h
R\bigl(1-\tfrac{1}{p}\bigr)\,\simeq\, S^{(2p)'}
$$
isometrically, where $(2p)'$ is the conjugate number of $2p$.
Let $Q_n\in B(\ell^2)$ be the orthogonal 
projection onto ${\rm Span}\{e_1,\ldots,e_n\}$. Then 
$\sum_{k=1}^n e_k\otimes e_k = Q_n$ in
the above identifications. Hence 
we obtain that
$$
\norm{Q_n}_{(2p)'}\leq K\norm{Q_n}_{2p}.
$$
Since $\norm{Q_n}_{q}=n^{\frac{1}{q}}$ for any $1<q<\infty$,
we obtain that $n^{1-\frac{1}{2p}}\leq K
n^{\frac{1}{2p}}$, equivalently, $n\leq K n^{\frac{1}{p}}$, for any 
$n\geq 1$. This yields a contradiction if $p>1$. In the case 
$p=1$, the fact that $T$ is not completely regular 
is obtained by applying (\ref{Pisier}) with $E=C$ instead of $E=R$.
\end{example}

\begin{remark}\label{Comm}
Here we consider the commutative case. 
Let $(\Omega,\F,\mu)$ be a measure space. For any
operator space $E$, 
$L^p(L^\infty(\Omega))[E]$ coincides with the Bochner 
space $L^p(\Omega;E)$. Thus if $(\Omega',\F'\mu')$ 
is another measure space and $T\colon L^p(\Omega)\to 
L^p(\Omega')$ is any bounded operator, then $T$ is completely 
regular (in the above sense) 
if and only if $T$ is regular in the lattice sense (see \cite[Chapter 1]{P4}
for details and background). Moreover 
in this case, the completely 
regular norm of $T$ coincides with its regular norm. 
It follows from \cite[Paragraph 1.2]{P4} that $T$ is regular
if (and only if) $T\otimes I_{\ell^1}$ extends to a bounded
map $T\overline{\otimes} I_{\ell^1}$ from $L^p(\Omega;\ell^1)$
into $L^p(\Omega';\ell^1)$ and in this case, we have
$\norm{T\overline{\otimes} I_{\ell^1}}=\norm{T}_{reg}$. Consequently,
$T\colon L^p(\Omega)\to 
L^p(\Omega')$ is $\ell^1$-bounded if and only if $T$ is regular 
and in this case, the $\ell^1$-bounded norm of $T$ is 
equal to its regular norm.
\end{remark}

\section{Disjointness and separating operators} \label{sec3}

In \cite{K}, Kan introduced the concept of Lamperti operators on commutative $L^p$-spaces,
which include $L^p$-isometries, $1\leq p\neq2<\infty$, and positive $L^2$-isometries. 
He then proved a structural theorem for such operators. 
In this section we provide a noncommutative version of this result, as well as a connection with $\ell^1$-boundedness.

Let us first recall some facts related to Jordan homomorphisms that we require in this section. 
(We refer to \cite{S}, \cite{OS} and \cite[Exercices 10.5.21-10.5.31]{KR} for general information.)
A Jordan homomorphism between von Neumann algebras $\mathcal{M}$ and $\mathcal{N}$ is a 
linear map $J\colon\mathcal{M}\to\mathcal{N}$ that satisfies $J(a^2)=J(a)^2$ 
and $J(a^\ast)=J(a)^\ast$, for every $a\in\mathcal{M}$. 
We say that the Jordan homomorphism $J\colon\M\to\N$ is a Jordan monomorphism when 
$J$ is one-to-one. Any Jordan homomorphism is a positive contraction
and any Jordan monomorphism is an isometry.

Let $J \colon\mathcal{M}\to\mathcal{N} $ be a  
Jordan homomorphism and 
let $\mathcal{D}\subset \mathcal{N}$ be the von Neumann algebra 
generated by $J(\mathcal{M})$. Let $e:=J(1)$. 
Then $e$ is a projection and $e$ is the unit of $\mathcal{D}$. 
According to \cite[Theorem 3.3]{S} (see also \cite[Corollary 7.4.9.]{OS}), there exist 
projections $g$ and $f$ in the center of $\mathcal{D}$ such that
\begin{itemize}
\item [(i)] $g+f=e$;
\item [(ii)] $a\mapsto J(a)g$ is a $\ast$-homomorphism;
\item [(iii)] $a\mapsto J(a)f$ is an anti-$\ast$-homomorphism;
\end{itemize}
Let $\mathcal{N}_1=g\mathcal{N}g$ and $\mathcal{N}_2=f\mathcal{N}f$. 
We let $\pi\colon\mathcal{M}\to\mathcal{N}_1$
and $\sigma\colon\mathcal{M}\to\mathcal{N}_2$ be defined by $\pi(a)=J(a)g$ 
and $\sigma(a)=J(a)f$, for all $a\in\M$.
Then, $\mathcal{D}= \mathcal{D}_1\mathop{\oplus}\limits^{\infty}\mathcal{D}_2$ 
and $J(a)=\pi(a)+\sigma(a)$, for all $a\in\M$. We will use the suggestive notations 
\begin{equation}\label{pisigma}
J=\begin{pmatrix}
\pi&0\\
0&\sigma
\end{pmatrix}
\qquad\hbox{and}\qquad
J(a)=\begin{pmatrix}
\pi(a)&0\\
0&\sigma(a)
\end{pmatrix}
\end{equation}
to refer to such a central decomposition. We note that 
$J$ is normal (i.e. $w^*$-continuous) if and only if $\pi$ and $\sigma$ are normal.

Assume that $\M\subset B(\H)$ acts on some Hilbert space $\H$ and let 
$x$ be a closed densely defined operator on $\H$, affiliated with $\M$.
If $x$ is self-adjoint, with 
polar decomposition $x=w\lvert x\rvert$, we 
let $s(x)$ denote the projection $w^\ast w\ (=ww^\ast)$, called  
the support of $x$.

The following remarkable characterization of $L^p$-isometries, $1\leq p\not=2<\infty$,
is at the root of our investigations.

\begin{theorem}[Yeadon \cite{Y}]\label{yeadon}
For $1\leq p<\infty,\ p\neq 2$, a bounded operator
$$
T\colon L^p(\mathcal{M})\longrightarrow L^p(\mathcal{N})
$$ 
is an isometry if and only 
if there exist a normal Jordan monomorphism $J\colon\mathcal{M}\to\mathcal{N}$, 
a partial isometry $w\in\mathcal{N}$, and a positive operator 
$B$ affiliated with $\mathcal{N}$, which verify the following conditions:
\begin{itemize}
\item [(a)] $T(a)=wBJ(a)$ for all $a\in\mathcal{M}\cap L^p(\mathcal{M})$;
\item [(b)] $w^{\ast}w=J(1)=s(B)$;
\item [(c)] every spectral projection of $B$ commutes with $J(a)$, for all $a\in\mathcal{M}$;
\item [(d)] $\tau_{\mathcal{M}}(a)=\tau_{\mathcal{N}}(B^pJ(a))$ for all $a\in\mathcal{M}^+$.
\end{itemize}
\end{theorem}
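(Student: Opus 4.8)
The plan is to prove the nontrivial implication: given an isometry $T\colon L^p(\M)\to L^p(\N)$ with $p\neq 2$, produce $J,w,B$ with properties (a)--(d). The starting point is the \emph{equality case in the noncommutative Clarkson / Hanner inequalities}: if $a,b\in L^p(\M)$ are disjoint (i.e. $a^*b=ab^*=0$), then $\|a+b\|_p^p = \|a\|_p^p+\|b\|_p^p$, and — crucially — for $p\neq 2$ a converse holds, namely the ``parallelogram-type'' identity characterizing disjointness. Since $T$ is isometric, $\|T(a)+T(b)\|_p = \|a+b\|_p$ for all scalars, and one deduces that $T(a)$ and $T(b)$ are disjoint whenever $a,b$ are. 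This is the separating property advertised in the introduction, and it is the one place where $p\neq 2$ is genuinely used.

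Next I would extract the Jordan structure. Restrict attention first to the case where $\M$ is finite with $\tau_\M(1)<\infty$, so that projections lie in $L^p(\M)$. For a projection $q\in\M$, write $T(q)=v_q C_q$ for the polar decomposition of the (not necessarily self-adjoint) element $T(q)$; using disjointness of $q$ and $1-q$ one shows $T(q)$ and $T(1-q)$ have orthogonal left and right supports, and that $T(q)^*T(q)$ is a function of a fixed positive operator. Set $B$ to be (essentially) $|T(1)^*|$ or the relevant positive part, and define $J(q)$ via the support projections of $T(q)$; the disjointness relations force $J(q_1+q_2)=J(q_1)+J(q_2)$ for orthogonal projections, $J(q)^2=J(q)$, and more generally $J$ extends by linearity and continuity to a map on $\M$ satisfying $J(q r q) = J(q)J(r)J(q)$ for projections, which is exactly the Jordan identity once one passes to linear combinations and then to the weak-$*$ closure. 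Self-adjointness $J(a^*)=J(a)^*$ is built in because $T(a^*) = T(a)^*$ fails in general — instead one must symmetrize, using that $|T(q)|$ and $|T(q)^*|$ are intertwined by the partial isometry $w$. Property (c), commutation of the spectral projections of $B$ with $J(a)$, comes out of the compatibility of the polar decompositions of $T(q)$ across all projections $q$ simultaneously. Property (d), the trace identity, follows by applying $\tau_\N$ to $|T(q)|^p = B^p J(q)$ and using $\|T(q)\|_p^p=\|q\|_p^p=\tau_\M(q)$, then extending from projections to all of $\M^+$ by spectral approximation. Normality of $J$ is obtained from normality of $T$ on the relevant predual, or by a direct weak-$*$ density argument.

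Finally I would remove the finiteness restriction by a standard exhaustion: write $\M$ as an increasing union of reduced algebras $e\M e$ over projections $e$ with $\tau_\M(e)<\infty$, apply the finite case to each $T|_{L^p(e\M e)}$, and check that the resulting data $(J_e, w_e, B_e)$ are coherent so that they patch together to global $J,w,B$ affiliated appropriately with $\N$; the coherence is again forced by the separating property applied to disjoint reduced pieces. The converse direction (that any $T=wBJ$ with (a)--(d) is isometric) is a direct computation: $|T(a)|^p = J(|a|^p) B^p$ by (b),(c) and the Jordan property, and then $\tau_\N(|T(a)|^p) = \tau_\N(B^p J(|a|^p)) = \tau_\M(|a|^p)$ by (d).

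The main obstacle is the passage from the pointwise disjointness information to the \emph{algebraic} Jordan identity $J(a^2)=J(a)^2$ with a single positive operator $B$ controlling all of the ``moduli'' $|T(q)|$ simultaneously: one has to show that the support projections and positive parts arising from the polar decompositions of $T(q)$, as $q$ ranges over all projections, are mutually compatible (commuting with a common $B$, generating a Jordan subalgebra), rather than just pairwise compatible for disjoint pairs. This rigidity is exactly what the $p\neq 2$ equality case buys, but assembling it into a clean global statement — and simultaneously handling the non-self-adjointness of $T$ via the partial isometry $w$ — is the technical heart of Yeadon's argument.
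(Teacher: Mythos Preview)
The paper does not give its own proof of Theorem~\ref{yeadon}: the result is quoted from Yeadon's original article \cite{Y} and used as background. There is therefore nothing in the paper to compare your proposal against directly.

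That said, your outline is an accurate summary of Yeadon's strategy, and the paper does reproduce the core of that argument in the proof of Proposition~\ref{sep}, which establishes the Yeadon-type factorization for arbitrary \emph{separating} bounded maps (dropping condition~(d)). Structurally that proof matches your sketch: one takes the polar decomposition $T(e)=w_eB_e$ for each projection $e$ of finite trace, sets $J(e)=w_e^*w_e=s(B_e)$, uses the separating property (which for isometries with $p\neq 2$ comes, as you say, from the equality case in Clarkson's inequality) to extend $J$ linearly to a Jordan map on $\A=\bigcup_{e} e\M e$, and then passes to all of $\M$ by a $w^*$-continuity argument (Lemma~\ref{C} in the paper) together with the exhaustion $e\to 1$. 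The global $B$ is obtained as the supremum of the $B_e$ and $w$ as the strong limit of the $w_e$, exactly the patching you describe. Two small inaccuracies in your write-up: $B$ is not $|T(1)^*|$ (indeed $1$ need not lie in $L^p(\M)$), but rather the increasing limit of $B_e=|T(e)|$ over finite-trace projections; and self-adjointness of $J$ is automatic from its definition on projections, not something requiring a separate symmetrization.
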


This motivates the introduction of the following concept. Since we would like to consider maps $T:L^p(\M)\to L^p(\N)$ that are not necessarily isometries we drop part $(d)$ of Theorem \ref{yeadon} in our definition.

\begin{definition}\label{YTF}
We say that a bounded operator $T\colon L^p(\mathcal{M})\to L^p(\mathcal{N})$, $1\leq p\leq\infty$, 
has a {\bf ``Yeadon type factorization''} if there exist a normal Jordan homomorphism 
$J\colon \mathcal{M}\to\mathcal{N}$, a partial isometry $w\in\mathcal{N}$, and a positive 
operator $B$ affiliated with $\mathcal{N}$, which verify the following conditions:
\begin{itemize}
\item [(a)] $T(a)=wBJ(a)$ for all $a\in\mathcal{M}\cap L^p(\mathcal{M})$;
\item [(b)] $w^{\ast}w=J(1)=s(B)$;
\item [(c)] every spectral projection of $B$ commutes with $J(a)$, for all $a\in\mathcal{M}$.
\end{itemize}
\end{definition}

\begin{remark}
The argument in the last paragraph of the proof of \cite[Theorem 2]{Y} 
shows that for an operator $T\colon L^p(\mathcal{M})\to L^p(\mathcal{N})$ 
with a Yeadon type factorization, 
$w$, $B$ and $J$ from Definition \ref{YTF} are uniquely determined by $T$. 
We call $(w, B, J)$ the Yeadon triple of the operator $T$.
\end{remark}

The crucial property that allowed Yeadon to describe $L^p$-isometries is that 
they map disjoint elements to disjoint elements. This property is shared by operators other than isometries and 
as we show in Proposition \ref{sep}, it characterizes operators with a Yeadon type factorization.
We introduce the relevant concepts and
supply a few preparatory results.

\begin{definition} Let $a$ and $b$ be elements in $L^p(\mathcal{M})$, $1\leq p
\leq \infty$. 
We say $a$ and $b$ are disjoint if $ab^*=a^*b=0$.
\end{definition}

\begin{lemma}\label{disjointness}
The elements $a$ and  $b$ in $L^p(\mathcal{M})$ are disjoint if and only if $\lvert a\rvert$ 
and $\lvert b\rvert$ are disjoint and $\lvert a^*\rvert$ and $\lvert b^*\rvert$ are disjoint.
\end{lemma}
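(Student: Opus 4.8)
\textbf{Proof proposal for Lemma \ref{disjointness}.}

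The plan is to reduce the statement about $a,b$ to statements about their moduli by exploiting polar decompositions together with the elementary fact that, for measurable operators, $x^*y=0$ if and only if the range projection of $y$ is orthogonal to the range projection of $x$ (equivalently, $\overline{\mathrm{ran}\,y}\subseteq\ker x$). Write the polar decompositions $a=u\lvert a\rvert$ and $b=v\lvert b\rvert$ with partial isometries $u,v\in\M$ satisfying $u^*u=s(\lvert a\rvert)=:p_a$, $v^*v=s(\lvert b\rvert)=:p_b$, and note that $uu^*=s(\lvert a^*\rvert)$, $vv^*=s(\lvert b^*\rvert)$, and $\lvert a^*\rvert=u\lvert a\rvert u^*$, $\lvert b^*\rvert=v\lvert b\rvert v^*$. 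The support projections of $\lvert a\rvert$ and $\lvert b\rvert$ are, respectively, the range projections of $a^*$ and $b^*$; those of $\lvert a^*\rvert$, $\lvert b^*\rvert$ are the range projections of $a$, $b$.

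First I would prove the forward implication. Assume $a^*b=0$ and $ab^*=0$. From $a^*b=0$ we get $\lvert a\rvert u^* v\lvert b\rvert=0$; multiplying on the left by $u$ and on the right by $v^*$ and using $u\lvert a\rvert=a$, $\lvert b\rvert v^*=b^*$... more cleanly: $a^*b=0$ means $\overline{\mathrm{ran}\,b}\perp\overline{\mathrm{ran}\,a}$, i.e. the range projection of $a$ is orthogonal to the range projection of $b$, which is exactly $s(\lvert a^*\rvert)\,s(\lvert b^*\rvert)=0$, hence $\lvert a^*\rvert\,\lvert b^*\rvert=0$; since both are positive this also gives $\lvert b^*\rvert\,\lvert a^*\rvert=0$, so $\lvert a^*\rvert$ and $\lvert b^*\rvert$ are disjoint. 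Symmetrically, $ab^*=0$ reads $(b^*)^*(a^*)^*$... apply the same reasoning to $a^*,b^*$ in place of $a,b$: $ab^*=0$ says $\overline{\mathrm{ran}\,b^*}\perp\overline{\mathrm{ran}\,a^*}$, i.e. $s(\lvert a\rvert)\,s(\lvert b\rvert)=0$, so $\lvert a\rvert$ and $\lvert b\rvert$ are disjoint. (Here I use that for a closed densely defined operator $x$ affiliated with $\M$, the range projection of $x$ equals $s(\lvert x^*\rvert)$ and the range projection of $x^*$ equals $s(\lvert x\rvert)$, together with $x^*y=0\iff \overline{\mathrm{ran}\,y}\subseteq\ker x=(\overline{\mathrm{ran}\,x^*})^\perp$.)

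For the converse, suppose $\lvert a\rvert\,\lvert b\rvert=0$ and $\lvert a^*\rvert\,\lvert b^*\rvert=0$. The first gives $p_a\perp p_b$, i.e. $v^*v\le 1-u^*u$ after noting $p_a=u^*u$, $p_b=v^*v$; the second gives $uu^*\perp vv^*$. Then $a^*b=\lvert a\rvert u^* v\lvert b\rvert$. Since $\lvert a\rvert=\lvert a\rvert p_a=\lvert a\rvert u^*u$, write $a^*b=\lvert a\rvert u^*u\,u^*v\,v^*v\lvert b\rvert=\lvert a\rvert u^*(uu^*)(vv^*)v\lvert b\rvert=0$ because $uu^*\,vv^*=0$. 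Similarly $ab^*=u\lvert a\rvert\,\lvert b\rvert v^*$... here I instead use $ab^*=u\lvert a\rvert u^*\cdot u v^* v\,$ — more directly, $ab^* = a b^* $ with $a=u\lvert a\rvert=u p_a\lvert a\rvert$ and $b^*=\lvert b\rvert v^* = \lvert b\rvert p_b v^*$, so $ab^*=u\lvert a\rvert p_a p_b\lvert b\rvert v^*=0$ since $p_ap_b=0$. Hence $a,b$ are disjoint.

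The only mild obstacle is bookkeeping in the measurable-operator setting: one must be a little careful that the "range projection equals ker-orthocomplement" facts and the factorizations $x=u\lvert x\rvert$, $\lvert x^*\rvert = u\lvert x\rvert u^*$, $s(\lvert x\rvert)=u^*u$, $s(\lvert x^*\rvert)=uu^*$ hold for unbounded affiliated operators and that the products appearing (e.g.\ $\lvert a\rvert u^*v\lvert b\rvert$) make sense as operators in $L^0(\M)$; all of this is standard (see \cite{Te}) and follows once one reduces to bounded truncations via spectral projections. Everything else is routine manipulation with partial isometries and orthogonal projections.
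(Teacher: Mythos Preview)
Your argument is correct and is essentially the paper's own proof, just phrased in terms of support projections and polar decompositions rather than ranges and kernels. Both proofs rest on the single observation that $x^*y=0$ is equivalent to the range projection of $y$ being orthogonal to that of $x$; the paper packages both directions into a chain of ``iff'' statements about $\mathrm{Im}$ and $\mathrm{Ker}$, while you run the forward and backward implications separately (the latter via an explicit computation with $u,v$), but the content is the same.
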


\begin{proof}
Let $a,\ b\in L^p(\mathcal{M})$. First we note that $a$ and $b$ are disjoint if and only if
$$
\text{Im}(b)\subseteq \text{Im}(a)^\perp
\qquad\text{\ and\ }\qquad
\text{Ker}(a)^\perp\subseteq \text{Ker}(b).
$$
This implies that $\lvert a\rvert$ and $\lvert b \rvert$ are disjoint if 
and only if $\text{Ker}(a)^\perp = \text{Ker}(\lvert a\rvert)^\perp\subseteq 
\text{Ker}(\lvert b\rvert)= \text{Ker}(b)$. Then 
$\lvert a^*\rvert$ and $\lvert b^*\rvert$ are disjoint if and only if 
$\text{Ker}(a^*)^\perp\subset \text{Ker}(b^*)$, which is
itself equivalent to
$\text{Im}(b)\subseteq \text{Im}(a)^\perp$. 
Therefore,
$a$ and $b$ are disjoint if and only if $\lvert a\rvert$ and $\lvert b\rvert$ are disjoint and $\lvert a^*\rvert$ and $\lvert b^*\rvert$ are disjoint.
\end{proof}

\begin{remark}\label{Ortho}
Consider the special case $p=2$ and let $a,b$ be two positive elements in 
$L^2(\M)$. Then $a$ and $b$ are disjoint if (and only if) $\tau_{\tiny\M}(ab)=0$, that is,
if and only if $a$ and $b$ are orthogonal in the Hilbertian sense.
Indeed assume that $\tau_{\tiny\M}(ab)=0$. Then
$0=\tau_{\tiny\M}(ab)=\tau_{\tiny\M}((a^{\frac{1}{2}}b^{\frac{1}{2}})(b^{\frac{1}{2}} a^{\frac{1}{2}}))$ and
$a^{\frac{1}{2}}b^{\frac{1}{2}}$ is the adjoint of 
$b^{\frac{1}{2}} a^{\frac{1}{2}}$.
Since the trace $\tau_{\tiny\M}$ is faithful, 
this implies
that $a ^{\frac{1}{2}} b^{\frac{1}{2}}=0$.
Therefore, $ab=a ^{\frac{1}{2}}(a ^{\frac{1}{2}} b ^{\frac{1}{2}} )b ^{\frac{1}{2}} =0$.
Hence $a$ and $b$ are disjoint.
\end{remark}

\begin{definition}\label{dytf} 
We say that a bounded operator $T\colon L^p(\mathcal{M})\to L^p(\mathcal{N})$, $1\leq p\leq\infty$, 
is separating if whenever $a,b\in L^p(\M)$ are disjoint, then $T(a)$ and $T(b)$ are disjoint.
\end{definition}

\begin{lemma}\label{dj}
Any Jordan homomorphism $J\colon\mathcal{M}\to\mathcal{N}$ is separating.
\end{lemma}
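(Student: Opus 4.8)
The plan is to prove that any Jordan homomorphism $J\colon\M\to\N$ is separating by reducing to the structure theorem recalled just before Theorem \ref{yeadon}, namely the central decomposition $J = \begin{pmatrix}\pi & 0\\ 0 & \sigma\end{pmatrix}$ into a $\ast$-homomorphism $\pi$ and an anti-$\ast$-homomorphism $\sigma$. Since $J$ is only assumed bounded on $\M = L^\infty(\M)$, the statement should first be read at the level of $L^\infty$; note however that $J$ is a positive contraction, and one checks that $J$ maps $\M\cap L^p(\M)$ into $\N\cap L^p(\N)$, so that "separating" makes sense for $J$ acting on $L^p(\M)$ (one extends $J$ to $L^p$ where needed, or simply works with the bounded part; the cleanest formulation is that $J$ preserves disjointness of pairs in $\M$).

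The key steps, in order, are as follows. First, by Lemma \ref{disjointness}, it suffices to show that if $a,b\in\M$ (or in $\M\cap L^p(\M)$) are disjoint, then $\lvert J(a)\rvert$ and $\lvert J(b)\rvert$ are disjoint and $\lvert J(a)^*\rvert$ and $\lvert J(b)^*\rvert$ are disjoint. Second, I would handle the $\ast$-homomorphism and anti-$\ast$-homomorphism parts separately using the decomposition $J(a) = \pi(a) + \sigma(a)$ with orthogonal central supports $g,f$; since $g$ and $f$ are central in $\D$ and orthogonal, $\pi(a)$ and $\sigma(b)$ live in orthogonal corners and are automatically disjoint, so it is enough to treat $\pi$ and $\sigma$ individually. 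Third, for a $\ast$-homomorphism $\pi$: disjointness $a^*b = ab^* = 0$ gives $\pi(a)^*\pi(b) = \pi(a^*b) = 0$ and $\pi(a)\pi(b)^* = \pi(ab^*) = 0$, which is exactly disjointness of $\pi(a),\pi(b)$. Fourth, for an anti-$\ast$-homomorphism $\sigma$: one has $\sigma(xy) = \sigma(y)\sigma(x)$ and $\sigma(x^*) = \sigma(x)^*$, so $\sigma(a)^*\sigma(b) = \sigma(a^*)\sigma(b) = \sigma(b a^*) = \sigma((ab^*)^*)^* $ — more directly, $\sigma(a)^*\sigma(b)=\sigma(a^*)\sigma(b)=\sigma(b\,a^*)$ and $\sigma(a)\sigma(b)^*=\sigma(a)\sigma(b^*)=\sigma(b^*a)$; from $ab^*=0$ and $a^*b=0$ one gets $ba^*=(ab^*)^*\!\!{}^*$... here I must be careful: $ab^*=0 \Rightarrow b a^* = (a b^*)^* = 0$, and $a^*b = 0 \Rightarrow b^* a = (a^* b)^* = 0$, hence $\sigma(a)^*\sigma(b)=\sigma(ba^*)=0$ and $\sigma(a)\sigma(b)^*=\sigma(b^*a)=0$, so $\sigma(a),\sigma(b)$ are disjoint. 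Fifth, combine: $J(a) = \pi(a)+\sigma(a)$ and $J(b)=\pi(b)+\sigma(b)$, with $\pi(\cdot)$ supported under $g$ and $\sigma(\cdot)$ under $f$, $gf = 0$; since disjointness is preserved under such orthogonal direct sums, $J(a)$ and $J(b)$ are disjoint.

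The one genuine subtlety — and the step I expect to require the most care rather than being purely routine — is the passage between $L^\infty$ and $L^p$, and making sure the products $\pi(a^*b)$ etc. are legitimate: if one wants $J$ separating as an operator on $L^p(\M)$ one needs disjoint $a,b\in L^p(\M)$, not just in $\M$, and then $\pi,\sigma$ must be understood as extended to the relevant $L^p$-spaces (using that a normal $\ast$-homomorphism induces a contraction between the $L^p$-spaces, after identifying the range with a reduced algebra). Alternatively, and more in the spirit of how this lemma is presumably used (to feed into Proposition \ref{sep} about Yeadon type factorizations, where $J$ appears composed with $w$ and $B$), the cleanest route is to verify disjointness of $\pi(a)$ and $\pi(b)$, respectively $\sigma(a)$ and $\sigma(b)$, purely via the range-space identifications $\text{Im}(\pi(b))\subseteq\text{Im}(\pi(a))^\perp$ and $\text{Ker}(\pi(a))^\perp\subseteq\text{Ker}(\pi(b))$ from the proof of Lemma \ref{disjointness}, using that a $\ast$-homomorphism sends support projections to support projections. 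I would present the $L^\infty$ case in full and remark that the $L^p$ case follows by the standard extension of $\pi$ and $\sigma$ to noncommutative $L^p$-spaces.
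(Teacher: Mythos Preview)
Your core argument is correct and essentially identical to the paper's: use the central decomposition $J=\begin{pmatrix}\pi&0\\0&\sigma\end{pmatrix}$ and compute $J(a)^*J(b)$ and $J(a)J(b)^*$ directly, using that $\pi$ is multiplicative and $\sigma$ is anti-multiplicative together with $a^*b=ab^*=0$ (and hence $ba^*=b^*a=0$). The paper does exactly this, in two lines.

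Two points where you overcomplicate things. First, the detour through Lemma~\ref{disjointness} (reducing to $\lvert J(a)\rvert$, $\lvert J(b)\rvert$, etc.) is unnecessary: once you have the decomposition, the products $J(a)^*J(b)$ and $J(a)J(b)^*$ vanish directly, which is the definition of disjointness. Second, and more importantly, your ``genuine subtlety'' about $L^p$ versus $L^\infty$ is a non-issue here. The lemma is stated for a Jordan homomorphism $J\colon\M\to\N$, i.e.\ a map between the von Neumann algebras themselves; Definition~\ref{dytf} of ``separating'' is given for $1\leq p\leq\infty$, and the relevant case is $p=\infty$. There is no need to extend $J$, $\pi$, or $\sigma$ to any $L^p$-space, and the paper makes no such extension. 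The way the lemma is used later (in Proposition~\ref{sep}) is only to conclude that $J(a)^*J(b)=0$ for disjoint $a,b\in\M\cap L^p(\M)\subset\M$, which is exactly the $L^\infty$ statement.
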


\begin{proof}
Let $J\colon\mathcal{M}\to\mathcal{N}$ be a Jordan homomorphism and consider a decomposition
$
J=\begin{pmatrix}
\pi&0\\
0&\sigma
\end{pmatrix}
$ 
as in (\ref{pisigma}).

Suppose that $a$ and $b$ are disjoint elements of $\M$.
Then we have 
\begin{align*}
	J(a)^*J(b)&= \begin{pmatrix}
\pi(a)^*\pi(b)&0\\
0&\sigma(a)^*\sigma(b)
\end{pmatrix}\\
&= \begin{pmatrix}
\pi(a^*b)&0\\
0&\sigma(ba^*)
\end{pmatrix}=0.
\end{align*}
Similarly, we can show that $J(a)J(b)^*=0$, and therefore $J$ is separating.
\end{proof}

\begin{lemma}\label{dj2}
Suppose that a bounded operator $T\colon L^p(\mathcal{M})\to L^p(\mathcal{N})$, 
$1\leq p<\infty$, is separating on 
$\mathcal{M}\cap L^p(\mathcal{M})$, that is,
$T(a)$ and $T(b)$ are disjoint for any disjoint $a$ and $b$ in $\M\cap L^p(\M)$. Then $T$ is separating.
\end{lemma}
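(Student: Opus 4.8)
The plan is to upgrade separation from $\M\cap L^p(\M)$ to all of $L^p(\M)$ by a density and approximation argument. First I would fix disjoint $a,b\in L^p(\M)$ and, using Lemma \ref{disjointness}, reduce to controlling the supports: I want to produce sequences $a_k,b_k\in\M\cap L^p(\M)$ with $a_k\to a$ and $b_k\to b$ in $L^p(\M)$ and, crucially, such that $a_k$ is disjoint from $b_k$ for every $k$. The natural device is the spectral truncation: with polar decompositions $a=u\lvert a\rvert$ and $b=v\lvert b\rvert$, set $a_k=u\lvert a\rvert\chi_{[1/k,k]}(\lvert a\rvert)$ and $b_k=v\lvert b\rvert\chi_{[1/k,k]}(\lvert b\rvert)$. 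Each $a_k$ lies in $\M\cap L^p(\M)$ (it is bounded, being a product of bounded operators, and dominated in absolute value by $\lvert a\rvert\in L^p(\M)$), and $\norm{a_k-a}_p\to 0$ by dominated convergence for the trace (equivalently, $\lvert a-a_k\rvert^p=\lvert a\rvert^p\chi_{\{0\}\cup(k,\infty)}(\lvert a\rvert)+\lvert a\rvert^p\chi_{[0,1/k)}(\lvert a\rvert)\to 0$). The key point is that $a_k$ and $b_k$ remain disjoint: by Lemma \ref{disjointness}, $a\perp b$ means $s(\lvert a\rvert)\perp s(\lvert b\rvert)$ and $s(\lvert a^*\rvert)\perp s(\lvert b^*\rvert)$; since $\lvert a_k\rvert$ is a spectral truncation of $\lvert a\rvert$ its support is dominated by $s(\lvert a\rvert)$, and similarly $s(\lvert a_k^*\rvert)\leq s(\lvert a^*\rvert)$ because $a_k^*a_k\leq \lvert a\rvert^2$ forces the left support of $a_k^*$, i.e. $s(\lvert a_k\rvert)$, and the right support matching; more directly $a_ka_k^*=u\lvert a\rvert^2\chi_{[1/k,k]}(\lvert a\rvert)u^*\leq u\lvert a\rvert^2 u^* = \lvert a^*\rvert^2$ so $s(\lvert a_k^*\rvert)\leq s(\lvert a^*\rvert)$. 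Hence all four support-orthogonality relations pass from $(a,b)$ to $(a_k,b_k)$, so $a_k\perp b_k$.

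Next I would invoke the hypothesis: since $a_k,b_k\in\M\cap L^p(\M)$ are disjoint, $T(a_k)$ and $T(b_k)$ are disjoint, i.e. $T(a_k)T(b_k)^*=0$ and $T(a_k)^*T(b_k)=0$ for all $k$. Now pass to the limit: $T$ is bounded on $L^p(\M)$, so $T(a_k)\to T(a)$ and $T(b_k)\to T(b)$ in $L^p(\N)$. By Hölder's inequality \eqref{Holder}, multiplication $L^p(\N)\times L^p(\N)\to L^{p/2}(\N)$ is jointly continuous, so $T(a_k)T(b_k)^*\to T(a)T(b)^*$ and $T(a_k)^*T(b_k)\to T(a)^*T(b)$ in $L^{p/2}(\N)$; since every term on the left vanishes, the limits vanish, giving $T(a)T(b)^*=0=T(a)^*T(b)$. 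That is, $T(a)$ and $T(b)$ are disjoint, so $T$ is separating.

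The main obstacle, and the only place requiring genuine care, is verifying that the truncated elements $a_k,b_k$ stay disjoint — everything else (membership in $\M\cap L^p(\M)$, norm convergence, continuity of products) is routine once the right truncation is chosen. The subtlety is that disjointness is a \emph{two-sided} support condition: one must check orthogonality both for $\lvert a_k\rvert$ versus $\lvert b_k\rvert$ \emph{and} for $\lvert a_k^*\rvert$ versus $\lvert b_k^*\rvert$, as Lemma \ref{disjointness} makes explicit, and the polar-decomposition-plus-truncation choice above is precisely engineered so that both left and right supports only shrink. One mild technical point: the functional calculus $\chi_{[1/k,k]}(\lvert a\rvert)$ is applied to the (possibly unbounded) positive operator $\lvert a\rvert$ affiliated with $\M$, which is legitimate since $\lvert a\rvert\in L^p(\M)\subset L^0(\M)$; the resulting spectral projection lies in $\M$, and the trace-dominated-convergence step is standard for noncommutative $L^p$-spaces. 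Alternatively, if one prefers to avoid unbounded functional calculus entirely, one can truncate using bounded functions $f_k(t)=\min(t,k)\cdot\mathbf 1_{[1/k,\infty)}(t)$ applied via continuous functional calculus to the bounded operator $\lvert a\rvert(1+\lvert a\rvert)^{-1}$ and reparametrise; the support-domination argument is unchanged.
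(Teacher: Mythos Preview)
Your proof is correct and follows essentially the same route as the paper: spectral truncation of $\lvert a\rvert$ and $\lvert b\rvert$ to land in $\M\cap L^p(\M)$, verification that the truncations remain disjoint, and passage to the limit via continuity of $T$ and of the product $L^p\times L^p\to L^{p/2}$. The only cosmetic differences are that the paper truncates with $\chi_{[0,n]}$ rather than $\chi_{[1/k,k]}$, and checks disjointness of $a_n,b_n$ by a direct two-line computation ($a_n^*b_n=p_na^*bq_n=0$ and $a_nb_n^*=vp_n\lvert a\rvert\lvert b\rvert q_nw^*=0$) rather than through the support inequalities of Lemma~\ref{disjointness}.
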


\begin{proof}
Let $a,b\in L^p(\mathcal{M})$ with $a^\ast b=ab^\ast=0$.
We let $a=v\lvert a\rvert$ and $b=w\lvert b\rvert$ be the polar decompositions of $a$ and $b$, respectively.
By Lemma \ref{disjointness}, we have $\lvert a\rvert\lvert b\rvert=0$. 

For any $n\geq 1$, let $p_n:=\chi_{[-n,n]}(\lvert a \rvert)$ be the projection associated with the 
indicator function of $[-n,n]$ in the Borel functional calculus of $\lvert a\rvert$, 
and similarly let $q_n:=\chi_{[-n,n]}(\lvert b \rvert)$. 
Let $a_n:=ap_n$ and $b_n:=bq_n$.
We have 
$$
p_n\lvert a\rvert=\lvert a\rvert p_n\to\lvert a\rvert
\qquad\hbox{and}\qquad
q_n\lvert b\rvert=\lvert b\rvert q_n\to\lvert b\rvert
$$ 
in $L^p(\mathcal{M})$. This implies that $a_n\to a$ and $b_n\to b$ in $L^p(\mathcal{M})$. 

Note that for any $n\geq1$,  
$a_n$ and $b_n$ belong to $\mathcal{M}\cap L^p(\mathcal{M})$.
Further we have
$$
a_n^\ast b_n=p_na^\ast bq_n=0
$$ 
and 
$$
a_n b_n^\ast =v\lvert a \rvert p_nq_n\lvert b\rvert w^\ast
=vp_n\lvert a\rvert\lvert b\rvert q_nw^\ast=0. 
$$
Thus $a_n$ and $b_n$ are disjoint.

By assumption this implies that 
$T(a_n)^\ast T(b_n)=0$ and $T(a_n)T(b_n)^\ast=0$. 
Passing to the limit, we deduce 
that $T(a)^\ast T(b)=0$ and $T(a)T(b)^\ast=0$. 
\end{proof}

From now on, we consider
$$
\E:=\bigl\{e\in\M\, :\, e\text{ is a projection and }\tau_{\tiny\M}(e)<\infty\bigr\}
\quad \text{ and }\quad
\A:=\bigcup_{\small{e\in\E}} e\M e.
$$
For any $x\in \M$, $exe\to x$ in the $w^*$-topology of $\M$, when $e\to 1$.
Further for any $1\leq p<\infty$ and any $x\in L^p(\M)$,
$exe\to x$ in $L^p(\M)$. Thus
$\A$ is a $w^*$-dense subspace of $\M$ and
a norm dense subspace of $L^p(\M)$, for any $1\leq p<\infty$.
Lemma \ref{C} below is a $w^*$-extension result of independent interest, which will be used 
in the proof of Proposition \ref{sep}.

Given any $w\in\M^\ast$ and $a,b\in\M$, we let $awb\in\M^\ast$ be defined by
$$
awb(x) : = w(bxa), \qquad \text{for all }x\in\M.
$$
Recall e.g. from  \cite[pp 126-128]{Ta}
the decomposition 
\begin{equation}\label{1-sum}
\M^\ast=\M_\ast\mathop{\oplus}\limits^1 \M^\ast_s,
\end{equation}
where $\M^\ast_s$ denotes the space of singular functionals on $\M$, 
and $\M_*$ is the predual of $\M$, which coincides with the space of normal functionals on $\M$.
It is well-known that if $w=w_n+w_s$ is the aforementioned decomposition of $w$, 
then $aw_n b$ and $aw_s b$ are the normal part and the singular part of $awb$, respectively.

\begin{lemma}\label{C}
Let $Y$ be a dual Banach space. For any bounded operator $u\colon \A\to Y$, the following are equivalent:
\begin{itemize}
\item[(i)] For every $e\in\E$, the restriction $u|_{e\tiny{\M} e}\colon e\M e\to Y$ is $w^*$-continuous.
\item[(ii)] There exists a $w^\ast$-continuons extension $\widehat{u}\colon \M\to Y$ of $u$.
\end{itemize} 
\end{lemma}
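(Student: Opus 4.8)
The plan is to prove the nontrivial implication (i) $\Rightarrow$ (ii), since (ii) $\Rightarrow$ (i) is immediate from the fact that each $e\M e$ is $w^*$-closed in $\M$ and the inclusion $e\M e\hookrightarrow\M$ is $w^*$-continuous. So assume (i). The first step is to construct the candidate extension $\widehat u$ via a predual adjoint. Fix $e\in\E$. The restriction $u|_{e\M e}\colon e\M e\to Y$ is, by hypothesis, $w^*$-continuous into the dual space $Y$, hence it is the adjoint of a bounded operator $v_e\colon Y_*\to (e\M e)_* = e\M_* e$; concretely $\langle u(exe),y_*\rangle = \langle exe, v_e(y_*)\rangle$ for all $x\in\M$, $y_*\in Y_*$. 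I would like to assemble these $v_e$ into a single operator $v\colon Y_*\to\M_*$ whose adjoint $v^*\colon\M\to Y$ is the desired extension. The natural guess is $v(y_*) := \lim_{e\to 1} v_e(y_*)$, the limit being taken along the net $\E$ ordered by $e\leq f$ iff $ef=fe=e$; one must check this limit exists in norm (or at least weakly) in $\M_*$.

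The key computation is a compatibility/consistency identity between the $v_e$. If $e\leq f$ in $\E$, then for $x\in\M$ we have $e(fxf)e = exe$, so $u(exe)=u(e(fxf)e)$, which forces a relation of the form $e\,v_f(y_*)\,e = v_e(y_*)$ in $\M_*$ (using the $awb$-type module action recalled before the lemma, applied on the predual side). Thus the net $(v_e(y_*))_{e\in\E}$ is a net of "truncations" of each other. To see it converges, I would combine two facts: first, $\|v_e\|=\|u|_{e\M e}\|\leq\|u\|$ uniformly, so the net is norm-bounded; second, the truncation maps $\omega\mapsto e\omega e$ on $\M_*$ converge strongly to the identity as $e\to 1$ (this is the predual counterpart of $exe\to x$ $w^*$-ly, and for normal functionals it holds in norm). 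The consistency identity then gives, for $e\leq f$, $\|v_f(y_*)-v_e(y_*)\|_{\M_*} = \|v_f(y_*) - e\,v_f(y_*)\,e\|$, and one must argue this is eventually small. This is the step I expect to be the main obstacle: norm convergence of $e\omega e\to\omega$ for a fixed normal functional $\omega$ is standard, but here $\omega = v_f(y_*)$ itself varies with $f$, so I need a uniformity argument — e.g., showing the net is Cauchy by a two-$\epsilon$ argument, or working with the weak topology on $\M_*$ first (where $w$-convergence of $e\omega e$ follows more softly) and then upgrading. An alternative, possibly cleaner, route is to dualize differently: regard $u$ as a bounded net of maps, use that $\A$ is norm-dense in $L^1$-type preduals, and invoke a Banach–Alaoglu / weak-$*$ compactness argument to extract a $w^*$-continuous extension directly, then check it restricts correctly.

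Once $v\colon Y_*\to\M_*$ is constructed as the limit, set $\widehat u := v^*\colon\M\to Y$; this is automatically bounded ($\|v\|\leq\|u\|$) and $w^*$-continuous (adjoints of predual maps always are). It remains to verify $\widehat u|_\A = u$, i.e. $\widehat u(exe)=u(exe)$ for $e\in\E$, $x\in\M$. For this fix such $exe$ and $y_*\in Y_*$: we have $\langle\widehat u(exe),y_*\rangle = \langle exe, v(y_*)\rangle = \lim_{f\to 1}\langle exe, v_f(y_*)\rangle$. For $f\geq e$ the consistency identity (in the form $exe\cdot v_f(y_*)$ pairing $=\langle e(fxf)e, v_f(y_*)\rangle=\langle u(exe),y_*\rangle$) shows every term of the net already equals $\langle u(exe),y_*\rangle$, so the limit is $\langle u(exe),y_*\rangle$ as wanted. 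Since $\E$ is cofinal this pins down $\widehat u$ on all of $\A$, and $w^*$-density of $\A$ in $\M$ makes $\widehat u$ the unique such extension, completing the proof.
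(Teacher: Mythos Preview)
Your overall strategy---construct a predual map $v\colon Y_*\to\M_*$ and set $\widehat u=v^*$---is exactly the framework the paper uses, the compatibility identity $e\,v_f(y_*)\,e = v_e(y_*)$ for $e\leq f$ is correct, and your verification that $\widehat u|_\A=u$ at the end is fine. But the gap you yourself flag is real, and your suggested fixes do not close it. The uniformity problem in the Cauchy argument (controlling $\|v_f(y_*)-e\,v_f(y_*)\,e\|$ while $f$ varies) is not bypassed by boundedness alone, and the Banach--Alaoglu alternative runs into the fact that $\M_*$ is not $w^*$-closed in $\M^*$, so a $w^*$-cluster point of your bounded net need not be normal.

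The paper sidesteps the convergence issue entirely with one extra ingredient: the decomposition $\M^*=\M_*\oplus \M^*_s$ into normal and singular functionals, together with the fact that this splitting is respected by the bimodule action $w\mapsto awb$. The argument first treats the scalar case $Y=\Cdb$: given $\alpha\in\A^*$ with each $\alpha|_{e\M e}$ normal, extend $\alpha$ to some $w\in\M^*$ by Hahn--Banach and write $w=w_n+w_s$. Since $ewe=\alpha|_{e\M e}$ is normal and $ewe=ew_ne+ew_se$ is again the normal/singular splitting, one gets $ew_se=0$ for every $e\in\E$, whence $w_n|_\A=\alpha$. For general $Y$, this scalar case shows that $u^*|_{Y_*}\colon Y_*\to\A^*$ actually takes values in $\kappa(\M_*)$, where $\kappa\colon\M_*\to\A^*$ is the (isometric) restriction map; then $\widehat u := (\kappa^{-1}\circ u^*|_{Y_*})^*$ does the job. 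So rather than proving your net converges, one shows directly---by stripping off the singular part---that each $\eta\circ u$ extends normally; that is the idea you are missing.
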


\begin{proof}
The implication ``$(ii)\Rightarrow(i)$" is trivial. 
For the converse, we assume (i).

We first  consider the case when $Y=\mathbb{C}$. 
Suppose that $\alpha\in\A^\ast$ is such that $\alpha|_{e\tiny{\M}e}$ is $w^*$-continuous 
for each $e\in\E$. Using Hahn-Banach, we let $w\colon\M\to\mathbb{C}$ be a bounded extension of 
$\alpha$ to $\M$ and we consider its decomposition $w=w_n+w_s$ according to (\ref{1-sum}).

For every $e\in\E$, $ewe$ is $w^*$-continuous. 
We noticed that $ewe=ew_n e+ew_s e$ is the decomposition of $ewe$. 
Since $ewe$ and $ew_n e$ are $w^*$-continuous, the singular part  $ew_s e$ of $ewe$ must be zero, 
and consequently, $ewe=ew_n e$, for every $e\in\E$. This implies that
the restriction of $w_n$ to $\A$ coincides with $\alpha$.
Thus $\widehat{\alpha}=w_n$ is a 
$w^\ast$-continuous extension of $\alpha$.

For the general case, let $v=u^\ast|_{Y_\ast}\colon Y_\ast\to\A^\ast$
be the restriction of the adjoint of $u$ to the predual of $Y$.
Let 
$$
\kappa\colon \M_\ast\longrightarrow\A^\ast
$$
denote the restriction map taking any $\nu\in\M_*$ to $\nu|_{\A}$. 
This is an isometry (by Kaplansky's theorem, say), whose range coincides 
with the space of all functionals $\A\to\mathbb{C}$ which admit a $w^*$-continuous extension to $\M$.

For each $\eta\in Y_\ast$, $\eta\circ u|_{e\tiny{\M} e}$ is $w^*$-continuous, for every $e\in\E$. 
By our argument for the case $Y=\mathbb{C}$, this implies 
that $\eta\circ u\in\text{Im}(\kappa)$.  This means that $v$ is valued in $\text{Im}(\kappa)$.
We can therefore consider $w=\kappa^{-1}\circ v\colon 
Y_\ast\to\M_\ast$ and define
$\widehat{u}=w^\ast\colon \M\to Y$. By construction, $\widehat{u}$ is $w^*$-continuous. 

We now claim that
$\widehat{u}$ is an extension of $u$. 
To see this, recall that for any $\eta\in Y_*$, the functional
$\kappa^{-1}\circ u^\ast(\eta)$ is an extension to $\M$ of $u^\ast(\eta)\colon\A\to\mathbb{C}$. Consequently,
$$
\langle \widehat{u}(exe),\eta\rangle=\langle exe,w(\eta)\rangle=
\langle exe,\kappa^{-1}\circ u^\ast(\eta)\rangle=\langle exe,u^\ast(\eta)\rangle=\langle u(exe),\eta\rangle
$$
for any $x\in\M$, any $e\in\E$ and any $\eta\in Y_*$. This proves the claim.
\end{proof}

\begin{prop}\label{sep}
For $1\leq p<\infty$, a bounded operator
$T\colon L^p(\mathcal{M})\to L^p(\mathcal{N})$ is separating if and only if it has a Yeadon type factorization.
\end{prop}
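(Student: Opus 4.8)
The implication ``Yeadon type factorization $\Rightarrow$ separating'' is the easy direction: if $T(a)=wBJ(a)$ with $w,B,J$ as in Definition~\ref{YTF}, then for disjoint $a,b\in\M\cap L^p(\M)$ Lemma~\ref{dj} gives that $J(a),J(b)$ are disjoint, and since $B$ commutes with all spectral projections of $J$, and $w^*w=s(B)=J(1)$, a direct computation of $T(a)^*T(b)=J(a)^*Bw^*wBJ(b)=J(a)^*B^2 J(b)$ and of $T(a)T(b)^*$ shows both vanish (using that $B$ commutes with the relevant projections so it can be ``absorbed'' between the disjoint pieces of $J(a)$ and $J(b)$); then invoke Lemma~\ref{dj2} to pass from $\M\cap L^p(\M)$ to all of $L^p(\M)$. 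I would write this out carefully but it is routine.

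\textbf{The substantial direction is ``separating $\Rightarrow$ Yeadon type factorization''.} The plan is to reconstruct the triple $(w,B,J)$ directly from a separating $T$. First I would reduce to the dense subalgebra $\A=\bigcup_{e\in\E}e\M e$ of finite-trace ``corners''. For a fixed $e\in\E$, the element $e$ is a finite projection, and I would analyze $T$ restricted to $eL^p(\M)e$ (or rather to the $L^p$-span of $e\M e$). The key idea: define, for each projection $q\le e$ in $\E$, the image $T(q)$, take its polar decomposition $T(q)=w_q B_q$, and use separatingness to show these are compatible as $q$ varies — disjoint subprojections of $e$ map to disjoint elements, so the partial isometries $w_q$ have orthogonal ranges and orthogonal initial spaces, and the $B_q$'s ``fit together''. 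This gives, in the limit over all projections, a partial isometry $w$ and a positive affiliated operator $B$ on the corner. The map $J$ should then be extracted via $J(a) = B^{-1}w^*T(a)$ (interpreted on the support of $B$), and one checks $J$ is multiplicative-up-to-Jordan: $J(q)$ is a projection for each projection $q$, which forces $J$ to be a Jordan homomorphism by a standard argument (a unital positive map sending projections to projections, or more precisely a map with $J(a^2)=J(a)^2$ on a generating set). The commutation condition (c) comes from the disjointness structure: spectral projections of $B$ are built from the $s(T(q))$'s, which commute with $J$ by construction.

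\textbf{The main obstacle} is the gluing/normality step: showing that the corner-wise data $(w_e,B_e,J_e)$ obtained on each $e\M e$ are consistent and assemble into a single normal Jordan homomorphism $J\colon\M\to\N$ and global $w,B$. This is precisely what Lemma~\ref{C} is designed for — it lets one upgrade a bounded map on $\A$ that is $w^*$-continuous on each corner $e\M e$ to a $w^*$-continuous (hence normal) map on all of $\M$. So the technical heart is: (1) establish the factorization and the Jordan property on each corner $e\M e$ (here the finiteness of $\tau_\M(e)$ and faithfulness of the trace are used, along with Remark~\ref{Ortho}-type arguments translating disjointness into orthogonality relations among images); (2) verify that $J$ (or rather $a\mapsto$ the Jordan-homomorphism part of the factorization) restricted to each $e\M e$ is $w^*$-continuous; (3) apply Lemma~\ref{C} to get the normal extension $\widehat J$ on $\M$; (4) check that $w$ and $B$ (defined as suprema/limits of the corner data in the appropriate topologies) satisfy (a), (b), (c) globally, and that (a) extends from $\A$ to all of $\M\cap L^p(\M)$ by density. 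I expect step (1) — extracting the Jordan homomorphism from separatingness on a finite corner, essentially a noncommutative Lamperti argument — and step (2)–(3) — the normality bookkeeping via Lemma~\ref{C} — to be where all the real work lies, with uniqueness of the Yeadon triple (already noted in the Remark after Definition~\ref{YTF}) guaranteeing that the corner data are forced to be coherent.
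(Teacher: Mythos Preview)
Your plan is correct and follows the paper's approach closely: take polar decompositions $T(e)=w_eB_e$ on finite-trace projections, use Yeadon's argument to build the Jordan map on $\A$, invoke Lemma~\ref{C} for the normal extension of $J$ to $\M$, and construct the global $w,B$ as in \cite{JRS}. The easy direction is exactly as you describe.

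The one step you underestimate is extending condition (a) from $\A$ to all of $\M\cap L^p(\M)$ in step (4): ``by density'' is not enough as stated. The map $J$ is only $w^*$-continuous on $\M$, not $L^p$-continuous, and a priori there is no reason $wBJ(x)$ should lie in $L^p(\N)$ for general $x\in\M\cap L^p(\M)$. The paper closes this by proving the norm estimate $\|wBJ(x)\|_p\le 2\|T\|\,\|x\|_p$ for $x\in\M\cap L^p(\M)$, using the central decomposition $J=\pi\oplus\sigma$ together with the identity $\|wBJ(x)\|_p^p=\tau_{\N}\bigl(B^pJ(|x|^p)\bigr)$ and a limiting argument along spectral projections. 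This shows $x\mapsto wBJ(x)$ extends to a bounded operator $T'$ on $L^p(\M)$, which then agrees with $T$ on the dense set $\A$ and hence everywhere. In Yeadon's isometric case this boundedness is automatic from condition (d), so this estimate is precisely the new work required when $T$ is merely separating; you should expect to supply it rather than absorb it into a density remark.
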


\begin{proof}
Assume that $T$ is separating. 
We adapt Yeadon's argument from \cite{Y},
taking into account that our operators are no longer necessarily isometries.

For any $e\in\E$, let $B_e=\vert T(e)\vert$ and let 
$T(e)=w_e B_e$ be the polar decomposition of $T(e)$. 
We have
\begin{equation}\label{eq00}
B_ew^*_ew_e=B_e=w_e^*w_eB_e.
\end{equation}
Set $J(e):=w_e^\ast w_e=s(B_e)$.  
If $e$ and $f$ are in $\E$ and $ef=0$, then since $T$ is separating we have 
\begin{equation}\label{eqsep}
T(e)^\ast T(f)=T(e)T(f)^\ast=0.
\end{equation}
Using (\ref{eqsep}), the argument in the proof of \cite[Theorem 2]{Y} shows that 
$J\colon \E\to\N$ extends to a linear map $J\colon\A\to\N$ such that 
\begin{equation}\label{Jordan}
J(a^2)=J(a)^2,\quad J(a^\ast)=J(a)^\ast,\quad\hbox{and}\quad \| J(a)\|_{\infty}\leq\|a\|_{\infty}
\end{equation}
for all $a\in\A$, 
\begin{equation}\label{eqY}
T(exe)=w_e B_e J(exe)	
\end{equation}
for all $e\in\E$ and $x\in\M$, and 
\begin{equation}\label{ef}
B_f J(e) = J(e)B_f = B_e,\quad\text{for any}\ e,f\in\E,\ \text{with}\ e\leq f.
\end{equation}

Note that by (\ref{Jordan}),
the restriction of $J$ to $e\M e$ is a Jordan homomorphism for any
$e\in\E$. Consequently,
\begin{equation}\label{JP}
J(exe) = J(e)J(x)J(e)
\end{equation}
for any $e\in\E$ and any $x\in\A$.

We now show that $J$ admits a normal extension (still denoted by)
$J\colon\M\to\N$. (Note that Yeadon's argument in the isometric case 
does not apply to our general case.) According to Lemma \ref{C} it suffices
to show that the restriction of $J$ to $e\M e$ is normal for any $e\in\E$.
To see this, we fix such an $e$ and we
let $(ex_ie)_i$ be a bounded net of $e\mathcal{M}e$ converging to $exe$ in the 
$w^*$-topology of $\M$. Then $ex_ie\to exe$ in the weak topology
of $L^p(\M)$, hence $w_e^*T(ex_ie)\to w_e^*T(exe)$ in the weak topology
of $L^p(\N)$. By (\ref{eqY}) and (\ref{eq00}), this implies that
$\tau_{\tiny\N}(AB_eJ(ex_ie))\to \tau_{\tiny\N}(AB_eJ(exe))$
for any $A\in L^{p'}(\N)$, where $p'$ is the conjugate number of $p$. 
Note that by (\ref{JP}),
the restriction of $J$ to $e\M e$ is valued 
in $J(e)\N J(e)$. To deduce from the above convergence property
that $J(ex_ie)\to J(exe)$ in the $w^*$-topology of
$\N$, it therefore suffices to check that 
$$
\bigl\{AB_e\, :\, A\in J(e)L^{p'}(\N)J(e)\bigr\}\quad\text{is 
a dense subset of}\ J(e)L^1(\N) J(e).
$$
This is indeed the case, since $B_e=\vert T(e)\vert\in L^p(\N)$
and $s(B_e) =J(e)$.

We note that since $J$ is $w^*$-continuous and $\A$ is $w^*$-dense in $\M$,
(\ref{JP}) holds true for any $e\in\E$ and any $x\in\M$.

We now use the increasing net $\E$ and we recall that $e\to 1$ in the 
$w^*$-topology of $\M$. Since $J$ is normal,  
$J(1)$ is the $w^*$-limit of $J(e)$.
Then using (\ref{eq00}) and (\ref{ef}),
the same argument as in \cite[Theorem 3.1]{JRS} 
can be implemented to obtain extensions $B$ 
(as supremum of the $B_e$) 
and $w$ (as strong limit of the $w_e$)
which satisfy properties $(b)$ and $(c)$ of Definition \ref{YTF}.
By (\ref{ef}) we further have
\begin{equation}\label{BJe}
BJ(e) = J(e)B = B_e\qquad\hbox{and}\qquad wJ(e)=w_e,
\end{equation}
for any $e\in\E$.

We now aim at showing property
$(a)$ of Definition \ref{YTF}.
For any $y\in\M^+\cap L^1(\M)$, using spectral projections,
we find a sequence $(e_n)_{n\geq 1}$ in $\E$ such that
$e_nye_n$ is increasing to $y$ when $n\to\infty$.
 Since
$J$ is normal, this implies that $B^pJ(e_nye_n)$ 
is increasing to $B^pJ(y)$ when $n\to\infty$. 
Consequently, using the normality of $\tau_{\tiny\N}$, we obtain that
\begin{equation}\label{Lim}
\tau_{\tiny\N}\bigl(B^pJ(y)\bigr) = \lim_{n\to\infty} 
\tau_{\tiny\N}\bigl(B^pJ(e_nye_n)\bigr).
\end{equation}

Let $x\in \M\cap L^p(\M)$.
Consider a decomposition 
$J=\begin{pmatrix}
\pi&0\\
0&\sigma
\end{pmatrix}
$ as in (\ref{pisigma}). Then we have
\begin{equation}\label{pi-sig}
J(\vert x\vert^p) =\begin{pmatrix}
\vert \pi(x)\vert^p &0\\
0&\vert \sigma(x^*)\vert^p
\end{pmatrix}.
\end{equation}
Then by a well-known argument 
(see the proof of the easy implication 
of \cite[Theorem 2]{Y}), this implies  that
\begin{equation}\label{NormY}
\norm{wBJ(x)}_p^p= \tau_{\tiny\N}\bigl(B^p
\vert J(x)\vert^p\bigr)=
\tau_{\tiny\N}\bigl(B^pJ(\vert x\vert^p)\bigr).
\end{equation}
Using (\ref{Lim}) with $y=\vert x\vert^p$, we deduce that
for some sequence $(e_n)_{n\geq 1}$ in $\E$, we have
$$
\norm{wBJ(x)}_p^p =\lim_{n\to\infty} 
\tau_{\tiny\N}\bigl(B^pJ(e_n\vert x\vert^p e_n)\bigr).
$$
Fix $e\in \E$. 
Combining (\ref{eqY}) and (\ref{JP}), we have
$T(exe) =  w_eB_eJ(x)J(e)$ from which we deduce 
as in (\ref{NormY}) that
$$
\norm{T(exe)}_p^p =\tau_{\tiny\N}\bigl(B_e^p\vert
J(x)\vert^p\bigr).
$$
Using (\ref{JP}) again, and (\ref{BJe}),  we have 
$$\tau_{\tiny\N}\bigl(B^pJ(e\vert x\vert^p e)\bigr)
=\tau_{\tiny\N}\bigl(B^pJ(e)J(\vert x\vert^p)J(e)\bigr)
=\tau_{\tiny\N}\bigl(B_e^pJ(\vert x\vert^p)\bigr). 
$$
Consequently, using (\ref{pi-sig}), we have
\begin{align*}
\tau_{\tiny\N}\bigl(B^pJ(e\vert x\vert^p e)\bigr)
&= \tau_{\tiny\N}\bigl(B_e^p
\vert \pi(x)\vert^p\bigr) + \tau_{\tiny\N}\bigl(B_e^p
\vert \sigma(x^*)\vert^p\bigr)\\
& \leq\tau_{\tiny\N}\bigl(B_e^p
\vert J(x)\vert^p\bigr) + \tau_{\tiny\N}\bigl(B_e^p
\vert J(x^*)\vert^p\bigr)\\
&\leq \norm{T(exe)}_p^p +\norm{T(ex^*e)}_p^p\\
&\leq 2\norm{T}^p\norm{x}_p^p.
\end{align*} 
Applying this with $e=e_n$ and passing to the limit, we 
deduce that $wBJ(x)\in L^p(\N)$ for any $x\in \M\cap L^p(\M)$, with
$\norm{wBJ(x)}_p\leq 2\norm{T}\norm{x}_p$.
This shows the existence of a (necessarily unique)
bounded operator $T'\colon L^p(\M)\to L^p(\N)$
such that $T'(x) = wBJ(x)$ for any $x\in \M\cap L^p(\M)$.
By (\ref{eqY}) and (\ref{BJe}), $T$ and $T'$ coincide on $\A$. Since the latter is
dense in $L^p(\M)$, we obtain that $T=T'$, hence 
property
$(a)$ of Definition \ref{YTF} is satisfied.

For the converse suppose that $T\colon
L^p(\mathcal{M})\to L^p(\mathcal{N})$ has a
Yeadon type factorization,
with Yeadon triple $(w,B,J)$, and
let us show that $T$ is separating. By Lemma \ref{dj2}, it is enough to show 
that $T$ is separating on $\M\cap L^p(\M)$. Let $a$ and $b$ be disjoint elements in $\M\cap L^p(\M)$, then
\begin{align*}
	T(a)^\ast T(b)&=J(a)^\ast B w^\ast w B J(b)&\text{by (a) in Definition \ref{YTF}}\\
	&=J(a)^\ast B^2 J(b)&\text{by (b) in Definition \ref{YTF}}\\
	&=B^2 J(a)^\ast J(b)&\text{by (c) in Definition \ref{YTF}}\\
	&=0&\text{by Lemma \ref{dj}}.
\end{align*}
Similarly we can show that $T(a)T(b)^\ast=0$, and 
hence $T(a)$ and $T(b)$ are disjoint. 
\end{proof}

After a first version of this paper
was circulated, we were informed that the `only if' part of Proposition \ref{sep}
was proved independently in \cite{HRW}.

We now give a series of remarks on this statement.

\begin{remark}\label{Restrict}
\

(a)\ In Proposition \ref{sep}, the proof that a separating map $T$ admits
a Yeadon type factorization only uses the separation property on positive
elements (even on projections with finite trace). 
Hence a bounded operator $T\colon L^p(\mathcal{M})\to L^p(\mathcal{N})$ is separating
if and only if for any $a,b\in L^p(\M)^+$, $ab=0$ implies that 
$T(a)^*T(b)=T(a)T(b)^*=0$.

\smallskip	
(b)\ 
Let us say that a separating map $T\colon L^p(\M)\to L^p(\N)$ is $2$-separating 
if the tensor extension $I_{S^p_2}\otimes T\colon L^p(M_2(\M))\to L^p(M_2(\N))$ is separating. 
Combining Proposition \ref{sep} and the 
argument in the proof of \cite[Proposition 3.2]{JRS}, we obtain
that $T\colon L^p(\M)\to L^p(\N)$ is $2$-separating if and only if the Jordan homomorphism in
the Yeadon factorisation of $T$ is multiplicative. This fact was 
also observed in \cite[Theorem 3.4]{HRW}. 
\end{remark}

\begin{remark}
Here we discuss the commutative case.
Let $(\Omega,\F,\mu)$ and $(\Omega',\F',\mu')$ be two 
$\sigma$-finite measure spaces. Let $\widetilde{\F}$ be the 
set of classes of $\F$ modulo the null sets (i.e. $E\in\F$ 
such that $\mu(E)=0$). We identify any element of $\F$ with 
its class in $\widetilde{\F}$.
We define $\widetilde{\F'}$ similarly.

Recall that a 
{\bf regular set morphism} (RSM) from $(\Omega,\F,\mu)$ into 
$(\Omega',\F',\mu')$ is a map
$\phi\colon \widetilde{\F}\to\widetilde{\F'}$ satisfying 
the following two properties:
\begin{itemize}
\item [(i)] For any $E_1,E_2\in \F$, $E_1\bigcap E_2=\emptyset
\,\Rightarrow\, \phi(E_1)\bigcap \phi(E_2)=\emptyset$.
\item [(ii)] For any sequence $(E_n)_{n\geq 1}$
of pairwise disjoint sets in $\F$, 
$\phi\bigl(\bigcup_{n\geq 1} E_n\bigr)
=\bigcup_{n\geq 1}\phi( E_n)$.
\end{itemize} 

Following \cite{Lam},
Kan \cite[Theorem 4.1]{K} showed that a bounded operator $T\colon
L^p(\Omega)\to L^p(\Omega')$ is separating if and only if there exist a 
measurable function $h$ and a regular set morphism $\phi$ 
from $(\Omega,\F,\mu)$ into 
$(\Omega',\F',\mu')$
such that for every set of finite measure $E$, we have
$$
T(\chi_E)=h\cdot \chi_{\phi(E)}.
$$
(See \cite{K} and \cite[Chapter 3]{FJ1} for more on this factorization property.)

There is a well-known correspondence between RSM and 
normal $*$-homomorphisms on $L^\infty$-spaces. Namely, let 
$\pi\colon L^\infty(\Omega,\F,\mu)\to L^\infty(\Omega',\F',\mu')$ 
be a normal $\ast$-homomorphism. Then
for any $E\in\F$, $\pi(\chi_E)$ is a projection, hence an indicator function. 
We may therefore define 
$\phi\colon \widetilde{\F}\to\widetilde{\F'}$  by $\pi(\chi_E)=\chi_{\phi(E)}$. 
It is easy to check that $\phi$ is a RSM. 
It turns out that any regular set morphism is of this form. 
Indeed let $\phi\colon \widetilde{\F}\to\widetilde{\F'}$ be a RSM. 
For any $g\in 
L^1(\Omega',\F',\mu')$, define $\nu_g\colon\F\to\Cdb$ by
$$
\nu_g(E)=\int_{\phi(E)} g(t)\,d\mu'(t),\qquad\hbox{for any}\  E\in\F.
$$
By (ii) and Lebegue's theorem, $\nu_g$ is a complex measure, 
whose total variation is less than or equal to $\norm{g}_1$.
By (i), $\varphi(\emptyset)=\emptyset$ hence 
$\nu$ is absolutely continuous with respect to $\mu$. 
Hence by the Radon-Nikodym theorem, there exists 
a necessarily unique $h_g\in 
L^1(\Omega,\F,\mu)$ such that
$$
\nu_g(E)=\int_{E} h_g(s)\,d\mu(s),\qquad\hbox{for any}\  E\in\F.
$$
Moreover $\norm{h_g}_1\leq\norm{g}_1$.
It is clear that the mapping $u\colon 
L^1(\Omega',\F',\mu')\to L^1(\Omega,\F,\mu)$
defined by $u(g)=h_g$ is linear. The above estimate 
shows that $u$ is contractive. 
Set
$$
\pi=u^*\colon L^\infty(\Omega,\F,\mu)
\longrightarrow L^\infty(\Omega',\F',\mu').
$$
By construction, $\pi$ is $w^*$-continuous. Further it is easy to check that
$\pi$ is a $*$-homomorphism and that $\pi(\chi_E)=\chi_{\phi(E)}$ 
for any $E\in\F$.

We finally note that all Jordan homomorphisms on $L^\infty$-spaces are $*$-homomorphisms. 
Thereby, through the aforementioned correspondence, Proposition \ref{sep} 
reduces to Kan's theorem in the case when $\M$ and $\N$ are commutative. 
\end{remark}

\begin{remark}\label{Injective}
\

(a)\
In general, separating operators may not be one-to-one (contrary to
isometries). We observe however that if a bounded operator
$T\colon L^p(\M)\to L^p(\N)$ is separating, with 
Yeadon triple $(w,B,J)$, then $T$ is one-to-one if and only 
if $J$ is a Jordan monomorphism. 

Indeed if $T$ is one-to-one, then $J(e)=s\bigl(\vert T(e)\vert\bigr)\not=0$ for
any $e\in\E\setminus\{0\}$. This implies that for any pairwise disjoint non zero
$e_1,\ldots, e_m$ in $\E$ and any $\alpha_1,\ldots,\alpha_m$ in $\Cdb$, we have
$$
\Bignorm{\sum_k \alpha_k J(e_k)}\,=\,\max\bigl\{\vert\alpha_k\vert\,
:\, k=1,\ldots,m\bigr\}\,=\,
\Bignorm{\sum_k \alpha_k e_k}.
$$
Hence the restriction of $J$ to $\E$ is an isometry. This readily implies
that $J$ is an isometry, and hence $J$ is one-to-one. 

Assume conversely that $J$ is one-to-one
and let $x\in L^p(M)$ such that $T(x)=0$.
Let $x=u\vert x\vert$ be the polar decomposition of $x$ and for any
integer $n\geq 1$, let $p_n=\chi_{[-n,n]}(\vert x\vert)$. Then consdider
$x'_n = x p_n$ and $x''_n= x(1-p_n)$. 
We have $x=x'_n+x''_n$ hence 
$T(x'_n)+T(x''_n)=0$.
Further we have
$$
{x'_n}^* x''_n = p_n\vert x \vert u^* u \vert x \vert (1-p_n) = 
p_n\vert x \vert^2 (1-p_n)=0,
$$
whereas $x'_n {x''_n}^* = xp_n(1-p_n) x^*=0$. Hence $x'_n$ and $x''_n$ are disjoint.
Since $T$  is separating this implies that 
$T(x'_n)$ and $T(x''_n)$ are disjoint. Since these elements are opposite to each other, 
this implies that $T(x'_n)=0$.
For any $n\geq 1$, $x'_n\in L^p(M)\cap M$ hence $T(x'_n)=wBJ(x'_n)$.
Since $w^*w=s(B)=J(1)$, this implies that
$J(x'_n)=0$, hence $x'_n=0$ by our assumption. Since $x'_n\to x$ in $L^p(M)$,
we deduce that $x=0$. This shows that $T$ is one-to-one.

\smallskip 
(b)\ 
We also observe that a separating operator
$T\colon L^p(\M)\to L^p(\N)$ with 
Yeadon triple $(w,B,J)$ is positive if and only 
$w$ is positive (if and only if $w$ is a projection). 
The verification is left to the reader.
\end{remark}

The following theorem shows that  separating  bounded 
operators are $\ell^1$-bounded. The converse does not hold true, this 
can be easily seen on commutative $L^p$-spaces (see Remark \ref{Comm}).

\begin{theorem}\label{main0}
Suppose that $T\colon L^p(\mathcal{M})\to L^p(\mathcal{N})$ is a
separating  bounded operator, with $1\leq p<\infty$. 
Then $T$ is $\ell^1$-bounded 
and $\norm{T}_{\ell^1} = \norm{T}$.
\end{theorem}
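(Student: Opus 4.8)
The plan is to use the Yeadon type factorization supplied by Proposition~\ref{sep}. Since $T$ is separating, write $T(a)=wBJ(a)$ for $a\in\M\cap L^p(\M)$ with Yeadon triple $(w,B,J)$, and decompose $J=\begin{pmatrix}\pi&0\\0&\sigma\end{pmatrix}$ as in (\ref{pisigma}), so that $\pi$ is a normal $*$-homomorphism and $\sigma$ a normal anti-$*$-homomorphism. The inequality $\norm{T}\le\norm{T}_{\ell^1}$ is automatic (restrict $T\overline{\otimes}I_{\ell^1}$ to a single coordinate, using the isometric embedding $L^p(\M)\hookrightarrow L^p(\M;\ell^1)$ for a sequence supported at one index, which follows from Lemma~\ref{Xu} applied to $|x|$). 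So the real content is $\norm{T}_{\ell^1}\le\norm{T}$.

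For that, fix $(x_n)_{n\ge1}\in L^p(\M;\ell^1)$ with $\norm{(x_n)_n}_{L^p(\M;\ell^1)}<1$. By Lemma~\ref{2.6} there are sequences $(a_n),(b_n)$ in $L^{2p}(\M)$ with $x_n=a_nb_n$, with $\sum_n a_na_n^*$ and $\sum_n b_n^*b_n$ convergent in $L^p(\M)$ and both sums of norm $<1$. I would like to write $T(x_n)=c_nd_n$ for suitable $c_n,d_n\in L^{2p}(\N)$ controlling the two relevant row/column sums. The natural guess is to push the factorization through $J$: one expects something like $T(x_n)=wBJ(a_n)J(b_n)$ when things are bounded, and then to split $J(a_n)=\pi(a_n)+\sigma(a_n)$. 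The key algebraic identities are $\pi(a)\pi(b)=\pi(ab)$ and $\sigma(a)\sigma(b)=\sigma(ba)$; since $\pi,\sigma$ are (anti)homomorphisms they satisfy $\pi(a)\pi(a)^*=\pi(aa^*)$ and $\sigma(a)^*\sigma(a)=\sigma(aa^*)$, which will let me compare $\sum_n\pi(a_n)\pi(a_n)^*$ with $\pi\big(\sum_n a_na_n^*\big)$ and similarly for $\sigma$. Because $g,f$ are central orthogonal projections summing to $J(1)$, the $\pi$-part and $\sigma$-part live on orthogonal corners $gL^p(\N)g$ and $fL^p(\N)f$, so the norms add "diagonally" and one does not lose a factor of $2$. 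After absorbing $w$ and $B$ (using $w^*w=s(B)=J(1)$, $B$ commutes with $J(\M)$, and $\norm{wBJ(\cdot)}_p\le\norm{T}\norm{\cdot}_p$ from the factorization, cf.\ (\ref{NormY})), one arrives at a factorization of $T(x_n)$ witnessing $\norm{(T(x_n))_n}_{L^p(\N;\ell^1)}\le\norm{T}$.

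The main obstacle is making the manipulation $T(x_n)=wBJ(a_n b_n)$ and its splitting rigorous when $a_n,b_n$ are only in $L^{2p}$ rather than in $\M$: neither $J$ nor the factorization formula is a priori defined on $L^{2p}$, and $J(a_n)$, $\pi(a_n)$ need to be interpreted via closure/affiliation arguments and polar decomposition (as in the proof of Lemma~\ref{2.6}, reducing to $a_n=|u_n^*|$ type positive factors via partial isometries with $\norm{\cdot}_\infty\le1$). I would handle this by first truncating: replace $a_n,b_n$ by $a_np_N$, etc., with spectral projections $p_N$ of finite trace so everything sits in $\A=\bigcup_{e\in\E}e\M e$ where $J$ is a genuine Jordan homomorphism and (\ref{eqY})--(\ref{JP}) apply, carry out the estimate there, and then pass to the limit using Lemma~\ref{2.3}, Remark~\ref{rem} and the density of $\A$ in $L^p(\M)$ together with the lower semicontinuity of the $L^p(\N;\ell^1)$-norm. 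A secondary technical point is verifying that the row norm of $(\pi(a_n)+\sigma(a_n))_n$ is controlled by $\norm{\sum_n a_na_n^*}_p^{1/2}$: here one writes the row sum $\sum_n (\pi(a_n)+\sigma(a_n))(\pi(a_n)+\sigma(a_n))^* = \sum_n \pi(a_na_n^*) + \sum_n \sigma(a_n a_n^*)$ (cross terms vanish by orthogonality of $g,f$), then uses that a normal $*$-homomorphism and a normal anti-$*$-homomorphism are positive contractions on the relevant $L^{p/2}$-spaces (extending by normality to $L^{p/2}$, or invoking that $\pi$ preserves the trace-pairing up to the central support), so each piece is $\le\norm{\sum_n a_na_n^*}_{p/2}$ on its corner, and the corners being orthogonal gives the same bound for the sum.
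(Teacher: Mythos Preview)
Your overall strategy matches the paper's: obtain the Yeadon triple via Proposition~\ref{sep}, split $J=\pi\oplus\sigma$, and exhibit a factorization $T(x_n)=c_nd_n$. But there is a genuine gap in the choice of $c_n,d_n$. You propose (up to the weights $w,B$) $c_n\sim\pi(a_n)+\sigma(a_n)$ and $d_n\sim\pi(b_n)+\sigma(b_n)$; then $c_nd_n=\pi(a_nb_n)+\sigma(b_na_n)$, which is \emph{not} $J(a_nb_n)$, so this does not recover $T(x_n)$. Your row-sum computation reflects the same error: since $\sigma$ is anti-multiplicative, $\sigma(a_n)\sigma(a_n)^*=\sigma(a_n^*a_n)$, not $\sigma(a_na_n^*)$, and $\sum_n a_n^*a_n$ is not controlled by the hypothesis. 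The correct move is a \emph{swap} on the anti-homomorphism part: set $c_n=w\bigl(S_1(a_n)+S_2(b_n)\bigr)$ and $d_n=S_1(b_n)+S_2(a_n)$, where $S_1(a)=B^{1/2}\pi(a)$ and $S_2(a)=B^{1/2}\sigma(a)$. Then $c_nd_n=T(x_n)$ and $\sum_n c_nc_n^*=(T_1(u)+T_2(v))w^*$, $\sum_n d_n^*d_n=w(T_1(v)+T_2(u))$, with $u=\sum a_na_n^*$, $v=\sum b_n^*b_n$ and $T_1=T(\cdot)g$, $T_2=T(\cdot)f$. (The boundedness of $S_1,S_2\colon L^{2p}\to L^{2p}$ comes from (\ref{NormY}); your claim that $\pi,\sigma$ themselves extend to contractions on $L^{p/2}$ is unjustified and generally false without the weight $B$.)

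There is a second missing ingredient. After the swap, the row bound is $(\|T_1(u)\|_p^p+\|T_2(v)\|_p^p)^{1/p}$ and the column bound is $(\|T_1(v)\|_p^p+\|T_2(u)\|_p^p)^{1/p}$; neither is $\le\|T\|$ on its own, so your ``each piece on its orthogonal corner'' argument only yields $\|T\|_{\ell^1}\le 2^{1/p}\|T\|$. The paper gets the sharp constant by an algebraic trick: since $\|T_1(\cdot)\|_p^p+\|T_2(\cdot)\|_p^p=\|T(\cdot)\|_p^p$, setting $\alpha=\|T_1(u)\|_p^p$ and $\beta=\|T_1(v)\|_p^p$ gives $\|T_2(u)\|_p^p\le\|T\|^p-\alpha$ and $\|T_2(v)\|_p^p\le\|T\|^p-\beta$, hence the product of the two bounds is at most $(\|T\|^p+(\alpha-\beta))(\|T\|^p-(\alpha-\beta))\le\|T\|^{2p}$.
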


\begin{proof}
We apply Proposition \ref{sep}. We let 
$(w,B,J)$ be the Yeadon triple of the operator $T$. Next according
to (\ref{pisigma}), we let
$
J=\begin{pmatrix}
\pi&0\\
0&\sigma
\end{pmatrix}
$ 
be a central decomposition of $J$ and we let
$e,f$ be the central projections such that $\pi=J(\,\cdotp)e$ 
and $\sigma=J(\,\cdotp)f$.

Let $T_1\colon L^p(\M)\to L^p(\N)$ and 
$T_2\colon L^p(\M)\to L^p(\N)$ be defined by 
$$
T_1(x)=T(x)e
\qquad\hbox{and}\qquad
T_2(x)=T(x)f
$$
for any $x\in L^p(\M)$. 
Then, $T=T_1+T_2$ and for all $u,v\in L^p(\M)$, we have
\begin{equation}\label{p-sum}
\|T_1(u)+T_2(v)\|_p^p=\|T_1(u)\|_p^p+\|T_2(v)\|_p^p.
\end{equation}
Let $(x_n)_{n\geq 1}\in L^p(\mathcal{M};\ell^1)$, with 
$\|(x_n)\|_{L^p(\mathcal{M};\ell^1)}<1$. By Lemma \ref{2.6}, 
there exist factorizations $x_n=a_nb_n$, $n\geq 1$,
with $a_n,b_n\in L^{2p}(\M)$, such that
$$
\Bignorm{\sum_{n=1}^{\infty}
a_na_n^*}_p <1
\qquad\hbox{and}\qquad
\Bignorm{\sum_{n=1}^{\infty} b_n^*b_n}_p <1.
$$

The identity (\ref{NormY}) and its proof show that for
any $a\in \M\cap L^{2p}(\M)$, we have
$$
\|B^{\frac12}\pi(a)\|_{2p}^{2p} =\tau_{\tiny\N}
(B^p\pi(\lvert a \rvert^{2p}))
\leq\tau_{\tiny\N}(B^pJ(\lvert a\rvert^{2p}))=
\|T(\lvert a\rvert^2)\|^p_p.
$$
Similarly, $\|B^{\frac12}\sigma(a)\|_{2p}^{2p}
\leq\|T(\lvert a \rvert^2)\|_p^p$. 
Hence there exist two bounded operators 
$$
S_1\colon L^{2p}(\mathcal{M})\longrightarrow L^{2p}(\mathcal{N})
\qquad\hbox{and}\qquad
S_2\colon L^{2p}(\mathcal{M})\longrightarrow L^{2p}(\mathcal{N})
$$ 
such that
$$
S_1(a)=B^{\frac{1}{2}}\pi(a)
\qquad\hbox{and}\qquad
S_2(a)=B^{\frac{1}{2}}\sigma(a),
$$
for all $a$ in $\mathcal{M}\cap L^{2p}(\mathcal{M})$.
It is clear from above that  
for any $a$ and $b$ in $L^{2p}(\mathcal{M})$ 
we have 
$$
S_1(a)S_1(b)=w^\ast T_1(ab),
\quad
S_2(a)S_2(b)=w^\ast T_2(ba)
\quad\hbox{and}\quad
S_1(a)S_2(b)=S_2(b)S_1(a)=0.
$$
We will use this repeatidly  in the rest of the proof.

For any $n\geq 1$, we have
$$
T(x_n)=T(a_nb_n)=T_1(a_nb_n)+T_2(a_nb_n)
= wS_1(a_n)S_1(b_n) +  wS_2(b_n)S_2(a_n).
$$
Hence $T(x_n)=c_nd_n$, with
$$
c_n =w\left(S_1(a_n)+S_2(b_n)\right)
\qquad\hbox{and}\qquad
d_n=S_1(b_n)+S_2(a_n). 
$$
With a similar computation, we obtain
\begin{align*}
c_nc_n^* & = w\left(S_1(a_n)+S_2(b_n)\right)
\left(S_1(a_n^*)+S_2(b_n^*)\right)w^*\\
& = 
\left(T_1(a_na_n^*)+T_2(b_n^*b_n)\right)w^*.
\end{align*}

Let $N\geq 1$ and set $u_N=\sum_{n=1}^N a_na_n^*$ and
$v_N=\sum_{n=1}^N b_n^* b_n$. Summing up we obtain 
$$
\sum_{n=1}^N c_nc_n^* = \left(T_1(u_N)+T_2(v_N)\right)w^*
$$
Appealing to (\ref{p-sum}), we deduce that 
$$
\Bignorm{\sum_{n=1}^N c_nc_n^*}^p_p\leq 
\|T_1(u_N)\|_p^p+\|T_2(v_N)\|_p^p.
$$
Similarly, 
$$
\Bignorm{\sum_{n=1}^N d_n^*d_n}^p_p\leq
\|T_1(v_N)\|_p^p+\|T_2(u_N)\|_p^p.
$$
Consequently,
$$
\Bignorm{\sum_{n=1}^N c_nc_n^*}^p_p
\Bignorm{\sum_{n=1}^N d_n^*d_n}^p_p
\leq
\left(\|T_1(u_N)\|_p^p+\|T_2(v_N)\|_p^p\right)
\left(\|T_1(v_N)\|_p^p+\|T_2(u_N)\|_p^p\right).
$$
Let $\alpha=\|T_1(u_N)\|_p^p$ and $\beta=\|T_1(v_N)\|_p^p$. Since, 
$$
\|T_1(u_N)\|_p^p+\|T_2(u_N)\|_p^p=\|T(u_N)\|_p^p\leq\|T\|^p\norm{u_N}^p_p,
$$
by (\ref{p-sum}), and $\norm{u_N}_p<1$, 
we have $\|T_2(u)\|_p^p\leq\|T\|^p-\alpha$. 
Similarly, $\|T_2(v)\|_p^p\leq\|T\|^p-\beta.$ 
Therefore,
\begin{align*}
\left(\|T_1(u_N)\|_p^p+\|T_2(v_N)\|_p^p\right) 
\left(\|T_1(v_N)\|_p^p+\|T_2(u_N)\|_p^p\right)
&\leq (\alpha+(\|T\|^p-\beta))(\beta+(\|T\|^p-\alpha))\\
&=\left(\|T\|^p-(\beta-\alpha)\right) \left(\|T\|^p+(\beta-\alpha)\right)\\
&=(\|T\|^{2p}-\left(\beta-\alpha\right)^2)\leq \|T\|^{2p},
\end{align*}
and hence
$$
\Bignorm{\sum_{n=1}^N c_nc_n^*}_p^{\frac12}
\Bignorm{\sum_{n=1}^N d_n^*d_n}_p^{\frac12}
\leq\norm{T}.
$$
This implies that $(T(x_n))_{n\geq 1}$ belongs to $L^p(\N;\ell^1)$
and that its norm in $L^p(\N;\ell^1)$ is less than or equal to 
$\norm{T}$.
This yields the boundedness of $T\otimes I_{\ell^1}$, as
well as the equality 
$\norm{T}_{\ell^1} = \norm{T}$.
\end{proof}

\section{Isometries on $L^2$-spaces with a Yeadon type factorization}\label{sec5}

As it is outlined in Section \ref{sec3}, the crucial property that allowed Yeadon 
to describe isometries between noncommutative $L^p$-spaces, $p\not=2$, is that they are separating. 
To show that every isometry is indeed separating 
he relied on the property that when $1\leq p\neq2<\infty$, 
the equality condition in Clarkson's inequality,
$\displaystyle{\|a+b\|_p+\|a-b\|_p=2(\|a\|_p+\|b\|_p)}$, holds true if and only if $a$ and $b$ are disjoint. 
However, this equality always holds true when $p=2$ and this is why the study of disjointness
on $L^2$-spaces requires a different approach.
This is the purpose of Lemma \ref{dinq} below and as a consequence, 
we will characterize isometries between noncommutative $L^2$-spaces which 
admit a Yeadon type factorization.

\begin{lemma}\label{dinq}
Suppose that $a,b\in L^2(\mathcal{M})$. Then, $a$ and $b$ are disjoint if and only if we have 
$$
\|(a,b)\|_{L^2(\tiny\M;\ell_2^1)}\leq\left(\|a\|^2_2+\|b\|_2^2\right)^{1/2}.
$$
\end{lemma}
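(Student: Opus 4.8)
The plan is to prove the two implications separately; the forward one is a short factorization estimate, and the converse is the substantial part. For the forward direction, assume $a,b\in L^2(\M)$ are disjoint, take polar decompositions $a=u\abs{a}$, $b=v\abs{b}$, and use the factorizations $a=\bigl(u\abs{a}^{1/2}\bigr)\bigl(\abs{a}^{1/2}\bigr)$ and $b=\bigl(v\abs{b}^{1/2}\bigr)\bigl(\abs{b}^{1/2}\bigr)$, whose four factors all lie in $L^4(\M)$. Since $\bigl(u\abs{a}^{1/2}\bigr)\bigl(u\abs{a}^{1/2}\bigr)^*=\abs{a^*}$ and $\bigl(\abs{a}^{1/2}\bigr)^*\bigl(\abs{a}^{1/2}\bigr)=\abs{a}$, the factorization description of the $L^2(\M;\ell^1)$-norm (Lemma \ref{2.6}) gives
$$
\norm{(a,b)}_{L^2(\tiny{\M};\ell^1_2)}\le\bignorm{\abs{a^*}+\abs{b^*}}_2^{1/2}\,\bignorm{\abs{a}+\abs{b}}_2^{1/2}.
$$
Now expand using $\norm{P+Q}_2^2=\norm{P}_2^2+\norm{Q}_2^2+2\tau_{\tiny{\M}}(PQ)$ for positive $P,Q\in L^2(\M)$, the identities $\tau_{\tiny{\M}}(\abs{a}^2)=\tau_{\tiny{\M}}(\abs{a^*}^2)=\norm{a}_2^2$, and the fact (Lemma \ref{disjointness} together with Remark \ref{Ortho}) that disjointness of $a,b$ forces $\tau_{\tiny{\M}}(\abs{a}\abs{b})=\tau_{\tiny{\M}}(\abs{a^*}\abs{b^*})=0$; then each norm on the right equals $\bigl(\norm{a}_2^2+\norm{b}_2^2\bigr)^{1/2}$, which is the stated inequality.

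For the converse, put $S=\norm{a}_2^2+\norm{b}_2^2$ (we may assume $S>0$) and suppose $\norm{(a,b)}_{L^2(\tiny{\M};\ell^1_2)}\le S^{1/2}$; I want to deduce $a^*b=ab^*=0$. Fix a small $\epsilon>0$. By Lemma \ref{2.6} there is a factorization $a=a_1b_1$, $b=a_2b_2$ with $a_j,b_j\in L^4(\M)$ and $\norm{a_1a_1^*+a_2a_2^*}_2\,\norm{b_1^*b_1+b_2^*b_2}_2\le S+\epsilon$, and after rescaling $(a_1,a_2)\mapsto\lambda(a_1,a_2)$, $(b_1,b_2)\mapsto\lambda^{-1}(b_1,b_2)$ I may assume $\norm{a_1a_1^*+a_2a_2^*}_2=\norm{b_1^*b_1+b_2^*b_2}_2=:r$, so $r^2\le S+\epsilon$. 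Write $p_j=\norm{a_ja_j^*}_2$, $q_j=\norm{b_j^*b_j}_2$, $s=\norm{a_1^*a_2}_2$ and $t=\norm{b_1b_2^*}_2$; cyclicity of the trace gives $\tau_{\tiny{\M}}(a_1a_1^*a_2a_2^*)=s^2$ and $\tau_{\tiny{\M}}(b_1^*b_1b_2^*b_2)=t^2$, hence $p_1^2+p_2^2=r^2-2s^2$ and $q_1^2+q_2^2=r^2-2t^2$. H\"older's inequality $\norm{a}_2=\norm{a_1b_1}_2\le\norm{a_1}_4\norm{b_1}_4=p_1^{1/2}q_1^{1/2}$ (and likewise for $b$), together with Cauchy--Schwarz in $\mathbb{R}^2$, then yields the chain
$$
S\le p_1q_1+p_2q_2\le\bigl(p_1^2+p_2^2\bigr)^{1/2}\bigl(q_1^2+q_2^2\bigr)^{1/2}=(r^2-2s^2)^{1/2}(r^2-2t^2)^{1/2}\le r^2\le S+\epsilon .
$$

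Comparing the two ends of this chain forces $(r^2-2s^2)(r^2-2t^2)\ge S^2$; since moreover $s^2,t^2\le p_1p_2\le r^2/2\le(S+\epsilon)/2$ and $r^2\ge S$, a routine rearrangement produces $(S-\epsilon)(s^2+t^2)\le 2S\epsilon+\epsilon^2$, so $s\to 0$ and $t\to 0$ as $\epsilon\to 0$. To finish, observe that $a^*b=b_1^*(a_1^*a_2)b_2$ and $ab^*=a_1(b_1b_2^*)a_2^*$, so H\"older gives $\norm{a^*b}_1\le\norm{b_1}_4\,s\,\norm{b_2}_4$ and $\norm{ab^*}_1\le\norm{a_1}_4\,t\,\norm{a_2}_4$; since $\norm{a_j}_4=p_j^{1/2}\le r^{1/2}$ and $\norm{b_j}_4=q_j^{1/2}\le r^{1/2}$ stay bounded (by $(S+1)^{1/4}$ once $\epsilon\le 1$), letting $\epsilon\to0$ forces $\norm{a^*b}_1=\norm{ab^*}_1=0$, i.e.\ $a$ and $b$ are disjoint. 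I expect the main obstacle to be exactly this converse: extracting a \emph{balanced} near-optimal factorization from the norm hypothesis and then squeezing out the quantitative estimate $s^2+t^2=O(\epsilon)$ from the nearly saturated H\"older/Cauchy--Schwarz chain, all while keeping the $L^4$-norms of the factors controlled.
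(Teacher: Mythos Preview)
Your proof is correct and follows essentially the same strategy as the paper's: the forward direction uses the identical polar-decomposition factorization, and the converse extracts a near-optimal factorization and shows the cross-terms $\norm{a_1^*a_2}_2$ and $\norm{b_1b_2^*}_2$ tend to $0$ as $\epsilon\to 0$, after which H\"older forces $a^*b=ab^*=0$. The only cosmetic differences are in bookkeeping: you rescale to a single balanced parameter $r$ and treat both cross-terms $s,t$ simultaneously through the product inequality $(r^2-2s^2)(r^2-2t^2)\ge S^2$, whereas the paper keeps the two factor-norm bounds separate, drops one positive cross-term, applies Cauchy--Schwarz in the Hilbertian sum $L^2(\M)\oplus L^2(\M)$ (your H\"older-plus-$\mathbb{R}^2$-Cauchy--Schwarz chain reaches the same endpoint), and then argues the second cross-term by symmetry. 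Your ``routine rearrangement'' is indeed routine: from $(r^2-2s^2)(r^2-2t^2)\ge S^2$, $r^2\le S+\epsilon$, and $4s^2t^2\le (s^2+t^2)^2\le r^2(s^2+t^2)$ (using $s^2,t^2\le r^2/2$), one gets $r^2(s^2+t^2)\le r^4-S^2\le 2S\epsilon+\epsilon^2$, and $r^2\ge S$ finishes it --- in fact slightly stronger than what you stated.
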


\begin{proof}
First suppose that for disjoint elements $a,b\in L^2(\mathcal{M})$ the polar decompositions are 
given by $a=u\lvert a\rvert$ and $b=v\rvert b\lvert$. Define
$$
a_1 =u\lvert a\rvert^{1/2},\quad
a_2 =\lvert a\rvert^{1/2},\quad 
b_1=v\lvert b\rvert^{1/2}\quad\hbox{and}\quad
b_2=\lvert b\rvert^{1/2}.
$$ 
These elements belong
to $L^4(\M)$ and we have $a=a_1a_2$ and $b=b_1b_2$.
Further we have 
$$
a_1a_1^*+b_1b_1^*=u\lvert a\rvert u^*+v\lvert b\rvert 
v^*
\qquad\hbox{and}\qquad
a_2^*a_2+b_2^*b_2=\lvert a\rvert+\lvert b\rvert. 
$$
Consequently,
$$
\|(a,b)\|_{L^2(\tiny\M;\ell_2^1)}^2\leq\|u\lvert a\rvert 
u^*+v\lvert b\rvert v^*\|_2\,\|\lvert a\rvert+\lvert b\rvert\|_2.
$$
Now, since $a$ and $b$ are disjoint we have that 
$\lvert a\rvert\lvert b\rvert=0$, by Lemma \ref{disjointness}, and so
\begin{equation*}
\begin{split}
\|\lvert a\rvert+\lvert b\rvert\|_2^2 &= 
\|\lvert a\rvert \|_2^2 + \|\lvert b\rvert\|_2^2 + 2 \tau_{\tiny_{\M}}(\lvert a\rvert\lvert b\rvert)\\
& =\| a \|^2_2+\|  b \|^2_2.
\end{split}
\end{equation*}
Similarly, since $a$ and $b$ are disjoint, we have $ua^*bv^*=0$, and so
\begin{align*}
\|u\lvert a\rvert u^*+v\lvert b\rvert v^*\|_2^2&=\|u\lvert a\rvert u^*\|^2_2+\|v\lvert b\rvert v^*\|^2_2+2 \tau_{\tiny_{\M}}\left(u\lvert a\rvert u^* v\lvert b\rvert v^*\right)\\
&=\|u\lvert a\rvert u^*\|^2_2+\|v\lvert b\rvert v^*\|^2_2+2  \tau_{\tiny_{\M}}\left(ua^*bv^*\right)\\
&=\|u\lvert a\rvert u^*\|^2_2+\|v\lvert b\rvert v^*\|^2_2\\
&\leq\ \| a\|^2_2+\| b\|^2_2.
\end{align*}
This implies that $\|(a,b)\|_{L^2(\tiny\M;\ell_2^1)}\leq\left(\|a\|^2_2+\|b\|^2_2\right)^{1/2}$.

Conversely, suppose that $a,b\in L^2(\mathcal{M})$ satisfy
$\|(a,b)\|_{L^2(\mathcal{M};\ell^1_2)}\leq\left(\|a\|_2^2+\|b\|_2^2\right)^{1/2}.$
Let $(\epsilon_k)_k$ be a sequence of positive real numbers with $\lim_k\epsilon_k=0$.
By Lemma \ref{2.6}, 
\begin{align*}
\|(a,b)\|_{L^2(\tiny\M;\ell^1)}=\inf\left\{\|uu^*+ww^*\|_2^{\frac12}\|v^*v+z^*z\|_2^{\frac12}\right\},
\end{align*}
where the infimum is taken over all factorizations $a=uv$ and $b=wz$, with $u,v,w,z\in L^4(\M)$.
Thus for any $k\geq 1$, we can find $u_k,v_k,w_k, z_k\in L^4(\M)$ such that $a=u_kv_k$, $b=w_kz_k$,
\begin{equation}\label{Bdd1}
\| u_ku_k^*+w_kw_k^*\|_2^{\frac12}\leq
(1+\epsilon_k)^{\frac18}\left(\|a\|_2^2+\|b\|_2^2\right)^{\frac14},
\end{equation}
and 
\begin{equation}\label{Bdd2}
\| v_k^*v_k+z_k^*z_k\|_2^{\frac12}\leq(1+\epsilon_k)^{\frac18}\left(\|a\|_2^2+\|b\|_2^2\right)^{\frac14}.
\end{equation}
This implies that 
\begin{equation}\label{eq1}
\|u_ku_k^*\|_2^2+\|w_kw_k^*\|_2^2+2 \tau_{\tiny_{\M}}\left(u_ku_k^*w_kw_k^*\right)\leq
\left(1+\epsilon_k\right)^{\frac12}\left(\|u_kv_k\|_2^2+\|w_kz_k\|_2^2\right)
\end{equation}
and 
\begin{equation}\label{eq2}
\|v_k^*v_k\|_2^2+\|z_k^*z_k\|_2^2+2 \tau_{\tiny_{\M}}
\left(v_k^*v_kz_k^*z_k\right)\leq\left(1+\epsilon_k\right)^{\frac12}\left(\|u_kv_k\|_2^2+\|w_kz_k\|_2^2\right).
\end{equation}

We claim that  
\begin{equation}\label{CS}
\left(\|u_kv_k\|_2^2+\|w_kz_k\|_2^2\right)^2\leq\bigl(\norm{u_k^*u_k}_2^2 + \norm{w_k^*w_k}_2^2\bigr)
\bigl(\norm{v_k v_k^*}_2^2 + \norm{z_k z_k^*}_2^2\bigr).
\end{equation}
Indeed
$$
\|u_kv_k\|_2^2 = \tau_{\tiny\M}\bigl((u_kv_k)^*(u_kv_k)\bigr) = \tau_{\tiny\M}\bigl(v_kv_k^*u_k^*u_k\bigr) = 
\,\bigl\langle u_k^*u_k, v_k v_k^*\bigr\rangle_{L^2(\tiny\M)}
$$
and similarly, $\|w_kz_k\|_2^2 = \langle w_k^*w_k, z_k z_k^*\bigr\rangle_{L^2(\tiny\M)}$.
Hence (\ref{CS}) follows by applying 
the
Cauchy-Schwarz inequality in the Hilbertian direct sum $L^2(\M)
\mathop{\oplus}\limits^{2} L^2(\M)$.

Multiplying inequalities (\ref{eq1}) and (\ref{eq2})
and using 
the fact that $\tau_{\tiny_{\M}}
\left(v_k^*v_kz_k^*z_k\right)\geq 0$, we obtain that
\begin{equation}\label{LHS}
\bigl(\|u_ku_k^*\|_2^2+\|w_kw_k^*\|_2^2+2 \tau_{\tiny_{\M}}\left(u_ku_k^*w_kw_k^*\right)\bigr)
\bigl(\|v_k^*v_k\|_2^2+\|z_k^*z_k\|_2^2\bigr)
\end{equation}
is less than or equal to $(1+\epsilon_k)\left(\|u_kv_k\|_2^2+\|w_kz_k\|_2^2\right)^2$. 
Now using (\ref{CS}) we deduce that (\ref{LHS}) is less than or equal to
$$
(1+\epsilon_k)\bigl(\norm{u_k^*u_k}_2^2 + \norm{w_k^*w_k}_2^2\bigr)
\bigl(\norm{v_k v_k^*}_2^2 + \norm{z_k z_k^*}_2^2\bigr).
$$
Now we observe that $\|u_ku_k^*\|_2^2 =\norm{u_k}^4_4 =\|u_k^*u_k\|_2^2$ and similarly for $w_k$, $v_k$ and $z_k$. 
Hence the above inequality reads
$$
\bigl(\norm{u_k}^4_4 +\norm{w_k}^4_4+2 \tau_{\tiny_{\M}}\left(u_ku_k^*w_kw_k^*\right)\bigr)
\leq (1+\epsilon_k)\bigl(\norm{u_k}^4_4 + \norm{w_k}^4_4\bigr).
$$
This yields
$$
\tau_{\tiny_{\M}}\left(u_ku_k^*w_kw_k^*\right)
\leq\epsilon_k \left(\frac{\norm{u_k}^4_4+\norm{w_k}^4_4}{2}\right).
$$
It follows from (\ref{Bdd1}) that
$(\norm{u_k}_4)_k$ and $(\norm{w_k}_4)_k$ are bounded sequences.
Hence we have that $\tau_{\tiny_{\M}}(u_ku_k^*w_kw_k^*)\to0$ as $k\to\infty$. Writing
$$
\tau_{\tiny_{\M}}(u_ku_k^*w_kw_k^*) = \tau_{\tiny_{\M}}(u_k^*w_kw_k^*u_k)
= \tau_{\tiny_{\M}}((u_k^*w_k)(u_k^*w_k)^*) = \norm{u_k^*w_k}_2^2,
$$
we deduce that $\norm{u_k^*w_k}_2\to0$ as $k\to\infty$.

We have $a^*b=v_k^*u_k^*w_kz_k$, hence 
$$
\|a^*b\|_1\leq\|v_k\|_4\|u_k^*w_k\|_2\|z_k\|_4.
$$
By (\ref{Bdd2}),
$(\norm{v_k}_4)_k$ and $(\norm{z_k}_4)_k$ are bounded sequences, hence the right hand side 
in the above inequality tends to $0$ as $k\to\infty$. 
We deduce that $a^*b=0$.

Finally using $\tau_{\tiny_{\M}}\left(v_k^*v_k z_k^*z_k\right)$
instead of $\tau_{\tiny_{\M}}\left(u_ku_k^*w_kw_k^*\right)$, we show 
as well that $ab^*=0$
and therefore, $a$ and $b$ are disjoint.
\end{proof}

\begin{theorem}\label{main1}
For a linear isometry $T\colon
L^2(\mathcal{M})\to L^2(\mathcal{N})$, the following statements are equivalent:
\begin{enumerate}[(i)]
\item $T$ has a Yeadon type factorization.
\item $T$ is $\ell^1_2$-contractive.
\item $T$ is $\ell^1$-contractive.
\end{enumerate} 
\end{theorem}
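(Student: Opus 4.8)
The plan is to prove the cycle of implications $(iii)\Rightarrow(ii)\Rightarrow(i)\Rightarrow(iii)$. The implication $(iii)\Rightarrow(ii)$ is immediate from the definitions: by construction $L^2(\N;\ell^1_2)$ is $L^2(\N)\oplus L^2(\N)$ equipped with the norm induced from $L^2(\N;\ell^1)$ on the sequences supported in the first two coordinates, so an $\ell^1$-contraction is \emph{a fortiori} $\ell^1_2$-contractive.

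For $(i)\Rightarrow(iii)$ I would simply combine two results already established. If $T$ has a Yeadon type factorization, then $T$ is separating by Proposition \ref{sep}, and then $T$ is $\ell^1$-bounded with $\norm{T}_{\ell^1}=\norm{T}$ by Theorem \ref{main0}. Since $T\colon L^2(\M)\to L^2(\N)$ is a linear isometry we have $\norm{T}=1$, hence $\norm{T}_{\ell^1}=1$; that is, $T$ is $\ell^1$-contractive.

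The only implication requiring genuine work is $(ii)\Rightarrow(i)$, and this is where Lemma \ref{dinq} is decisive. By Proposition \ref{sep} it suffices to show that an $\ell^1_2$-contractive isometry $T$ is separating. So let $a,b\in L^2(\M)$ be disjoint. By the first (``only if'') part of Lemma \ref{dinq}, $\norm{(a,b)}_{L^2(\tiny\M;\ell^1_2)}\leq(\norm{a}_2^2+\norm{b}_2^2)^{1/2}$. Applying the $\ell^1_2$-contractivity of $T$ and then the fact that $T$ preserves the $L^2$-norm, we obtain
$$
\norm{\bigl(T(a),T(b)\bigr)}_{L^2(\tiny\N;\ell^1_2)}\,\leq\,\norm{(a,b)}_{L^2(\tiny\M;\ell^1_2)}\,\leq\,\bigl(\norm{a}_2^2+\norm{b}_2^2\bigr)^{1/2}\,=\,\bigl(\norm{T(a)}_2^2+\norm{T(b)}_2^2\bigr)^{1/2}.
$$
By the second (``if'') part of Lemma \ref{dinq}, this forces $T(a)$ and $T(b)$ to be disjoint. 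Hence $T$ is separating, and Proposition \ref{sep} then yields that $T$ has a Yeadon type factorization.

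Thus the theorem follows almost formally once Lemma \ref{dinq}, Proposition \ref{sep} and Theorem \ref{main0} are available, so the main obstacle is really Lemma \ref{dinq}, the two-sided characterization of disjointness in $L^2$ in terms of the $\ell^1_2$-norm, which has been dealt with above. The only subtlety in assembling the pieces is to notice that it is precisely the isometry hypothesis on $T$ that converts the bare contraction inequality into the exact form $\norm{(T(a),T(b))}_{L^2(\tiny\N;\ell^1_2)}\leq(\norm{T(a)}_2^2+\norm{T(b)}_2^2)^{1/2}$ needed to reapply Lemma \ref{dinq} on the target side.
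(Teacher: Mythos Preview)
Your proposal is correct and follows essentially the same approach as the paper's own proof. The paper likewise observes that, in the light of Proposition~\ref{sep} and Theorem~\ref{main0}, only the implication $(ii)\Rightarrow$ ``$T$ is separating'' requires proof, and it establishes this exactly as you do: apply Lemma~\ref{dinq} on the domain side, use $\ell^1_2$-contractivity, use the isometry hypothesis to rewrite the right-hand side in terms of $\norm{T(a)}_2$ and $\norm{T(b)}_2$, and then apply Lemma~\ref{dinq} on the target side.
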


\begin{proof}
In the light of Proposition \ref{sep} and  Theorem \ref{main0}, 
we only need to establish that if (ii) holds true, then
$T$ is separating.

Suppose that $T$ is $\ell^1_2$-contractive.
Let $a,b\in L^2(\mathcal{M})$ be disjoint elements. By Lemma \ref{dinq},
$$
\|(Ta,Tb)\|_{L^2(\tiny\N;\ell_2^1)} \leq\|(a,b)\|_{L^2(\tiny\M;\ell^1_2)}
\leq (\| a\|_2^2+\|b\|_2^2)^{\frac12}.
$$
Since $T$ is an isometry we have $\norm{T(a)}=\norm{a}$ and 
$\norm{T(b)}=\norm{b}$ and hence
$$
\|(Ta,Tb)\|_{L^2(\tiny\N;\ell_2^1)} \leq 
(\| Ta\|_2^2+\| Tb\|_2^2)^{\frac12}.
$$
By Lemma \ref{dinq} again, this implies 
that $Ta$ and $Tb$ are disjoint.
Hence $T$ is separating.
\end{proof}

\begin{remark}
\

(a)\ As mentioned in Remark \ref{Comm}, 
when $\mathcal{M}=L^\infty(\Omega)$ and $\mathcal{N}=L^\infty(\Omega')$ are commutative,  
a bounded operator $T\colon L^2(\Omega)\to L^2(\Omega^\prime)$ is $\ell^1$-contractive 
if and only if $T$ is regular, with $\|T\|_{reg}\leq1$. Hence, Theorem  
\ref{main1} implies that in the commutative case, 
an isometry $T\colon L^2(\Omega)\to L^2(\Omega^\prime)$ is 
separating if and only if $\|T\|_{reg}\leq1$.
This result is implicit in \cite{Pel}.

\smallskip	
(b)\  Let $T\colon L^p(\M)\to L^p(\N)$, $1\leq p<\infty$, be a separating isometry,
with the Yeadon triple $(w,B,J)$. We show in \cite{LMZ} that when $p=2$ and $\M$ and $\N$ are hyperfinite, then
$J$ is multiplicative (equivalently, $J$ is a $*$-homomorphism)
if and only if $T$ is completely regular with $\|T\|_{\text{reg}}=1$. 
This is an $L^2$-analog of \cite[Theorem 3.1]{JRS} which says that for $1\leq p\neq 2<\infty$, 
$J$ is multiplicative if and only if $T$ is a complete isometry. 
\end{remark}

In \cite{B}, Broise showed that every bijective positive isometry 
between noncommutative $L^2$-spaces 
associated with semifinite factors admits a Yeadon type factorization. 
Using Proposition \ref{sep}, one can 
actually obtain the following more general statement.

\begin{corollary}\label{bor}
Suppose that $T\colon L^2(\mathcal{M})\to L^2(\mathcal{N})$ is a positive isometry. 
Then $T$ admits a Yeadon type factorization.
\end{corollary}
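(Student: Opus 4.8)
The plan is to deduce this from Proposition \ref{sep}: it suffices to show that a positive isometry $T\colon L^2(\mathcal{M})\to L^2(\mathcal{N})$ is separating, and then the Yeadon type factorization comes for free. One could alternatively invoke Theorem \ref{main1} and try to verify $\ell^1_2$-contractivity, but establishing the separation property directly is far more economical in the Hilbertian setting.

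First I would reduce to positive elements. By Remark \ref{Restrict}(a), $T$ is separating as soon as $T(a)^*T(b)=T(a)T(b)^*=0$ whenever $a,b\in L^2(\mathcal{M})^+$ satisfy $ab=0$. Since $T$ is positive, $T(a)$ and $T(b)$ are positive, hence self-adjoint, so both of these identities amount to the single equation $T(a)T(b)=0$, that is, to the disjointness of the positive elements $T(a)$ and $T(b)$.

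Next I would bring in the Hilbert space structure. For positive $a,b$, Remark \ref{Ortho} tells us that $ab=0$ is equivalent to $\tau_{\tiny\M}(ab)=0$, i.e. to $a\perp b$ in the Hilbert space $L^2(\mathcal{M})$. Because $T$ is a linear isometry between Hilbert spaces, the polarization identity shows that $T$ preserves inner products, so $\langle T(a),T(b)\rangle_{L^2(\tiny\N)}=\langle a,b\rangle_{L^2(\tiny\M)}=0$; that is, $T(a)\perp T(b)$ in $L^2(\mathcal{N})$. Applying Remark \ref{Ortho} in $\mathcal{N}$ to the positive elements $T(a),T(b)$, their orthogonality forces $T(a)T(b)=0$, which is exactly what was needed. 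Hence $T$ is separating, and Proposition \ref{sep} then yields the Yeadon type factorization.

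There is essentially no hard step here; the only point worth isolating is that for $p=2$ one can sidestep the Clarkson-inequality machinery used in the case $p\neq 2$ and instead exploit the elementary fact that a linear Hilbert space isometry is inner-product preserving, combined with the $p=2$ characterization of disjointness of positive elements via Hilbertian orthogonality (Remark \ref{Ortho}). If one preferred not to cite Remark \ref{Restrict}(a), one could instead reduce a general disjoint pair to the positive case by hand via Lemma \ref{disjointness}, but the reduction through positive elements is the cleanest route.
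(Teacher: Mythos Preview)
Your proof is correct and follows essentially the same route as the paper: reduce via Remark \ref{Restrict}(a) to positive disjoint pairs, use that a Hilbert space isometry preserves orthogonality, and invoke Remark \ref{Ortho} together with Proposition \ref{sep}. The only difference is that you spell out the polarization argument and the reduction $T(a)^*T(b)=T(a)T(b)^*=0 \Leftrightarrow T(a)T(b)=0$ for positive images, which the paper leaves implicit.
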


\begin{proof}
Let $a,b\in L^2(\M)$ be positive elements, with $ab=0$. They 
are orthogonal and isometries preserve orthogonality, hence
$T(a)$ and $T(b)$ are orthogonal. Since $T(a)$ and $T(b)$
are positive, Remark \ref{Ortho} ensures that  $T(a)$ and $T(b)$ are
disjoint. 

By Remark \ref{Restrict} (a) and Proposition \ref{sep}, the above shows that 
$T$ admits a Yeadon type factorization.
\end{proof}

\section{positivity and $\ell^1$-contractivity}\label{sec6}

For any $n\geq2$, we let $S^p_n=S^p(\ell_2^n)$
and we let $S^p_n[L^p(\M)]$ be the space $S^p_n\otimes L^p(\M)$ equipped with the norm and the
partial order coming from its
identification with the space $L^p(M_n(\M))$, see Section \ref{sec2}.

We say that a bounded operator $T\colon
L^p(\mathcal{M})\to L^p(\mathcal{N})$, $1\leq p < \infty$, is $n$-positive if
$$
I_{S^p_n}\otimes T\colon S^p_n[L^p(\mathcal{M})]\longrightarrow S^p_n[L^p(\mathcal{N})]
$$
is positive. We say that $T$ is completely positive if it is $n$-positive for all $n\geq 1$.

\begin{proposition}\label{6.2}
Suppose that $T\colon L^p(\M)\to L^p(\N)$ is a $2$-positive contraction, then $T$ is $\ell^1$-contractive.
\end{proposition}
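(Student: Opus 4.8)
The plan is to deduce $\ell^1$-contractivity from the factorization description of elements of $L^p(\M;\ell^1)$ in Lemma \ref{2.6}, exploiting $2$-positivity through a Kadison–Schwarz type inequality. First I would recall that for a $2$-positive contraction $T$, one has the operator inequality
$$
T(a)^*T(a)\leq T(a^*a),\qquad a\in L^{2p}(\M)\cap\M,
$$
and more generally, applying $2$-positivity to the $2\times 2$ matrix built from $a$ and $b$, the ``rectangular'' Kadison–Schwarz inequality $T(b)^*T(a)$ is controlled in the appropriate sense. The natural route is therefore: given $(x_n)_{n\ge1}$ in $L^p(\M;\ell^1)$ with norm $<1$, write $x_n=a_nb_n$ with $a_n,b_n\in L^{2p}(\M)$ and $\bignorm{\sum_n a_na_n^*}_p<1$, $\bignorm{\sum_n b_n^*b_n}_p<1$ as in Lemma \ref{2.6}(ii), and then produce a factorization $T(x_n)=c_nd_n$ with the analogous bounds on $\sum_n c_nc_n^*$ and $\sum_n d_n^*d_n$.

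The key step is choosing $c_n$ and $d_n$. Unlike the separating case (Theorem \ref{main0}), there is no Jordan homomorphism to split the map; instead I would pass to the $2\times 2$ amplification $I_{S^p_2}\otimes T$ and use positivity directly. Concretely, consider in $M_2(L^{2p}(\M))$ the elements
$$
A_n=\begin{pmatrix} a_n^* & 0\\ b_n & 0\end{pmatrix},
$$
so that $A_n^*A_n=\begin{pmatrix} a_na_n^*+b_n^*b_n & 0\\ 0 & 0\end{pmatrix}$ and $A_nA_n^*=\begin{pmatrix} |a_n^*|^2 & \\ & |b_n|^2 \end{pmatrix}$ (up to the off-diagonal cross terms). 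The strategy is that $2$-positivity of $T$, together with the matrix form of the Kadison–Schwarz inequality, lets one dominate the matrix of the images $T(x_n)$ by $(I\otimes T)(A_n^*A_n)$, and then sum over $n$ and apply the contractivity of $T$ (and of $I_{S^p_2}\otimes T$, which follows since a $2$-positive map is automatically bounded with the same norm as $T$ on the diagonal, or one uses that $T$ contraction $\Rightarrow$ $I_{S^p_2}\otimes T$ contraction for $2$-positive $T$). One obtains $\sum_{n=1}^N c_nc_n^*$ and $\sum_{n=1}^N d_n^*d_n$ with norms bounded by (roughly) $\bignorm{\sum_n(a_na_n^*+b_n^*b_n)}_p$; a small renormalization/optimization over the freedom in the factorization $x_n=a_nb_n$ — splitting the ``$K$'' symmetrically as in the proof of Lemma \ref{2.6} so that both sums are $<1$ — then yields the bound $1$ and hence $\norm{T}_{\ell^1}\le 1$.

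The main obstacle I anticipate is making the Kadison–Schwarz estimate interact cleanly with the $\ell^1$-norm, which is intrinsically a \emph{bilinear} factorization norm rather than a quadratic one: the inequality $T(a)^*T(a)\le T(a^*a)$ is one-sided, so to control the product $c_nd_n=T(a_n)T(b_n)$ one must simultaneously control $\sum_n c_nc_n^*$ \emph{and} $\sum_n d_n^*d_n$, which forces a symmetric treatment of $a_n$ and $b_n$ and the use of $2$-positivity on a matrix that mixes them (rather than two separate scalar Kadison–Schwarz inequalities). A secondary technical point is the density/approximation issue: the Kadison–Schwarz inequality is most transparent for $a,b\in\M\cap L^{2p}(\M)$, so I would first establish everything for finitely supported sequences with entries in $\M\cap L^p(\M)$ and factorizations through $\M\cap L^{2p}(\M)$, and then pass to the limit using Lemma \ref{2.3}, Remark \ref{rem} and the density of $\A$ — exactly the kind of routine limiting argument used repeatedly earlier in the paper.
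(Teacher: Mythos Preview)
Your outline has the right instinct --- pass to a $2\times 2$ matrix and exploit $2$-positivity --- but there is a genuine gap at the decisive step, and the Kadison--Schwarz route as you describe it cannot close it.

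The first problem is that $T$ is only defined on $L^p(\M)$, not on $L^{2p}(\M)$. So expressions like $T(a_n)$, $T(b_n)$, or $(I\otimes T)(A_n)$ with $A_n\in M_2(L^{2p}(\M))$ are undefined, and the Kadison--Schwarz inequality $T(a)^*T(a)\le T(a^*a)$ simply does not make sense here. Consequently your tentative factorization ``$c_nd_n=T(a_n)T(b_n)$'' is not available, and no renormalization of the $a_n,b_n$ can rescue it. (Separately, your matrix $A_n$ has $A_nA_n^*$ with off-diagonal entry $a_n^*b_n^*$, not $a_nb_n$, so it does not even encode $x_n$.)

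What the paper does instead avoids acting on $L^{2p}$ entirely. One forms the positive matrix
\[
z_n=\begin{pmatrix} a_na_n^* & a_nb_n\\ b_n^*a_n^* & b_n^*b_n\end{pmatrix}\in S^p_2[L^p(\M)],
\]
whose entries are in $L^p(\M)$, so $(I_{S^p_2}\otimes T)(z_n)\ge 0$ is legitimate. The key step you are missing is then to take the \emph{positive square root}
\[
\begin{pmatrix}\alpha_n & \beta_n\\ \beta_n^* & \delta_n\end{pmatrix}
:=\bigl((I_{S^p_2}\otimes T)(z_n)\bigr)^{1/2}
\]
in $S^{2p}_2[L^{2p}(\N)]$. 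Squaring gives $T(a_na_n^*)=\alpha_n^2+\beta_n\beta_n^*$, $T(b_n^*b_n)=\beta_n^*\beta_n+\delta_n^2$, and --- crucially --- $T(x_n)=\alpha_n\beta_n+\beta_n\delta_n$. This is a \emph{two-term} factorization, so one appeals to Junge's original double-index definition of $L^p(\N;\ell^1)$ (not the single-factor form of Lemma \ref{2.6}), with $u_{1n}=\alpha_n$, $u_{2n}=\beta_n$, $v_{1n}=\beta_n$, $v_{2n}=\delta_n$; the two control sums collapse exactly to $\sum_n T(a_na_n^*)$ and $\sum_n T(b_n^*b_n)$, and contractivity of $T$ finishes the argument. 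No density or approximation step is needed.
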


\begin{proof}
Let $T\colon L^p(\M)\to L^p(\N)$ be a $2$-positive contraction and let
$(x_n)_{n\geq1}$ be a sequence in $L^p(\M)$ such that $\|(x_n)_{n\geq 1}\|_{L^p(\tiny\M;\ell^1)}<1$. 
According to Lemma \ref{2.6}, we may choose
sequences $(a_n)_{n\geq1}$ and $(b_n)_{n\geq1}$ in $L^{2p}(\M)$ such that 
$x_n=a_n b_n$ for any $n\geq 1$,  
\begin{equation}\label{Norm<1}
\Bignorm{\sum_{n=1}^\infty a_n a_n^*}_p<1
\qquad\hbox{and}\qquad
\Bignorm{\sum_{n=1}^\infty b_n^*b_n}_p <1. 
\end{equation}
For any $n\geq 1$, let
$$
z_n:=\begin{pmatrix}
a_na_n^*&a_nb_n\\
b_n^* a_n^*&b_n^*b_n
\end{pmatrix}
$$ 
in $S^p_2[L^p(\M)]$. Then 
$z_n=\begin{pmatrix}
a_n&0\\
b_n^*&0
\end{pmatrix}
\begin{pmatrix}
a_n^*&b_n\\
0&0
\end{pmatrix},
$ hence $z_n\geq 0$.
Therefore by the $2$-positivity of $T$, 
$$
(I_{S_2^p}\otimes T)(z_n)=\begin{pmatrix}
T(a_n a_n^*)& T(a_n b_n)\\
T(b_n^* a_n^*)& T(b_n^* b_n)
\end{pmatrix}\,\geq0.
$$
Consider the positive square root 
$$
\begin{pmatrix}
\alpha_n&\beta_n\\
\beta_n^*&\delta_n
\end{pmatrix}:=\bigl((I_{S_2^p}\otimes T)(z_n)\bigr)^{\frac{1}{2}}.
$$
Then $\alpha_n,\beta_n,\delta_n$ belong to $L^{2p}(\N)$, we have 
$\alpha_n\geq0$, $\delta_n\geq0$, and
\begin{align*}
T(a_{n}a_{n}^*) & =\alpha_{n}^2+ \beta_{n}\beta_n^*,\\
T(b_{n}^*b_{n}) & =\beta_n^*\beta_n +\delta_n^2,\\
T(a_nb_n) & = \alpha_n\beta_n+\beta_n\delta_n.
\end{align*}
Using the third equation above and Junge's definition of $L^p(\N;\ell^1)$, we get that
$$
\|\left(T(a_nb_n)\right)_{n\geq 1}\|_{L^p(\tiny\N;\ell^1)}
\leq \Bignorm{\sum_{n=1}^{\infty}\alpha_n^2+\sum_{n=1}^{\infty}\beta_n\beta_n^*}_p^{\frac{1}{2}} \,
\Bignorm{\sum_{n=1}^{\infty} \beta_n^*\beta_n+\sum_{n=1}^{\infty}\delta_n^2}_p^{\frac{1}{2}}.
$$
(The convergence of the series are justified by the next lines.)

We can now apply the first two equations and (\ref{Norm<1}) to deduce that
\begin{align*}
\|\left(T(x_n)\right)_{n\geq 1}\|_{L^p(\tiny\N;\ell^1)}&\leq
\Bignorm{\sum_{n=1}^{\infty} T(a_na_n^*)}_p^{\frac{1}{2}}
\Bignorm{\sum_{n=1}^{\infty} T(b_n^*b_n)}_p^{\frac{1}{2}}\\
&\leq \Bignorm{T\Bigl(\sum_{n=1}^{\infty} a_n a_n^*\Bigr)}_p^{\frac{1}{2}} 
\Bignorm{T\Bigl(\sum_{n=1}^{\infty} b_n^* b_n\Bigr)}_p^{\frac{1}{2}}\\
&\leq\|T\|\Bignorm{\sum_{n=1}^\infty a_n a_n^*}_p^\frac12
\Bignorm{\sum_{n=1}^\infty b_n^* b_n}_p^\frac12\\
& < 1.
\end{align*}
This shows that $T$ is $\ell^1$-contractive.
\end{proof}

\begin{remark}
\

(a)\ An obvious consequence of Proposition \ref{6.2} is that if $T$ is a completely positive contraction then 
it is $\ell^1$-contractive.
	
\smallskip	
(b)\ 
Let $\N^{op}$ be the opposite 
von Neumann algebra of $\N$ and let $I_{op}\colon L^p(\N)\to L^p(\N^{op})$ denote the identity map. 
We say that $T\colon L^p(\M)\to L^p(\N)$ is $2$-copositive if 
the operator $I_{op}\circ T\colon L^p(\M)\to L^p(\N^{op})$ 
is $2$-positive. It is easy to check that
$$
L^p(\N;\ell^1)=L^p(\N^{op},\ell^1)
\qquad\text{isometrically}.
$$
Therefore, Proposition \ref{6.2} implies
that any contractive $2$-copositive map $L^p(\M)\to L^p(\N)$ is $\ell^1$-contractive. 
It therefore follows that if a positive map $L^p(\M)\to L^p(\N)$ can be written as a convex combination
of a contractive $2$-positive map and a contractive $2$-copositive map,
then it is $\ell^1$-contractive.
\end{remark}

We do not know if any positive contraction 
is $\ell^1$-contractive, however we
show below that positive operators are $\ell^1$-bounded.

\begin{proposition}
Let $T\colon L^p(\M)\to L^p(\N)$ be a bounded operator. 
If $T$ is positive, then $T$ is $\ell^1$-bounded, with $\|T\|_{\ell^1}\leq4\|T\|$.
\end{proposition}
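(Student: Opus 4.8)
The plan is to reduce the general positive case to the positive‑sequence case of Lemma~\ref{Xu} by means of a polarization identity, accepting the loss of the constant $4$. Fix a sequence $(x_n)_{n\ge1}$ in $L^p(\M)$ with $\norm{(x_n)_{n\ge1}}_{L^p(\tiny\M;\ell^1)}<1$. By Lemma~\ref{2.6} we may write $x_n=a_nb_n$ with $a_n,b_n\in L^{2p}(\M)$ and
$$
\Bignorm{\sum_{n=1}^{\infty} a_na_n^*}_p<1
\qquad\hbox{and}\qquad
\Bignorm{\sum_{n=1}^{\infty} b_n^*b_n}_p<1.
$$
For $k\in\{0,1,2,3\}$ set $c_{n,k}:=a_n^*+i^{-k}b_n\in L^{2p}(\M)$ (here $i^2=-1$). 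A direct computation using $\sum_{k=0}^3 i^k=\sum_{k=0}^3 i^{2k}=0$ and $\sum_{k=0}^3 i^k i^{-k}=4$ gives the polarization identity
$$
4\,a_nb_n=\sum_{k=0}^{3}i^k\,c_{n,k}^*c_{n,k},\qquad n\ge1,
$$
so that $4\,T(x_n)=\sum_{k=0}^3 i^k\,T(c_{n,k}^*c_{n,k})$ and each $T(c_{n,k}^*c_{n,k})$ is a positive element of $L^p(\N)$.

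Next I would control, for each fixed $k$, the positive sequence $(c_{n,k}^*c_{n,k})_{n\ge1}$. From the inequality $(\alpha-\beta)^*(\alpha-\beta)\ge0$ one gets $c_{n,k}^*c_{n,k}\le 2\bigl(a_na_n^*+b_n^*b_n\bigr)$ for all $n$; summing and using that $0\le u\le v$ implies $\norm{u}_p\le\norm{v}_p$, we obtain for every $N\geq1$
$$
\Bignorm{\sum_{n=1}^{N}c_{n,k}^*c_{n,k}}_p\leq 2\Bigl(\Bignorm{\sum_{n=1}^{N}a_na_n^*}_p+\Bignorm{\sum_{n=1}^{N}b_n^*b_n}_p\Bigr)<4.
$$
By Lemma~\ref{2.3} and Remark~\ref{rem} applied with $2p$ in place of $p$, the series $\sum_n c_{n,k}^*c_{n,k}$ therefore converges in $L^p(\M)$ to a positive element $w_k$ with $\norm{w_k}_p\le4$. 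Since $T$ is bounded and positive, $\sum_n T(c_{n,k}^*c_{n,k})$ converges in $L^p(\N)$ to $T(w_k)\ge0$, and Lemma~\ref{Xu} gives $(T(c_{n,k}^*c_{n,k}))_{n\ge1}\in L^p(\N;\ell^1)$ with
$$
\bignorm{(T(c_{n,k}^*c_{n,k}))_{n\ge1}}_{L^p(\tiny\N;\ell^1)}=\norm{T(w_k)}_p\le4\norm{T}.
$$

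Finally, since $L^p(\N;\ell^1)$ is a normed space, the identity $T(x_n)=\tfrac14\sum_{k=0}^3 i^k\,T(c_{n,k}^*c_{n,k})$ together with the triangle inequality yields $(T(x_n))_{n\ge1}\in L^p(\N;\ell^1)$ with
$$
\bignorm{(T(x_n))_{n\ge1}}_{L^p(\tiny\N;\ell^1)}\le\tfrac14\sum_{k=0}^{3}\bignorm{(T(c_{n,k}^*c_{n,k}))_{n\ge1}}_{L^p(\tiny\N;\ell^1)}\le4\norm{T}.
$$
Since $L^p(\M)\otimes\ell^1$ is dense in $L^p(\M;\ell^1)$, this shows that $T\otimes I_{\ell^1}$ extends to a bounded map $T\overline{\otimes}I_{\ell^1}\colon L^p(\M;\ell^1)\to L^p(\N;\ell^1)$ with $\norm{T}_{\ell^1}\le4\norm{T}$, which is the assertion. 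I expect the only delicate points to be the convergence of the series $\sum_n c_{n,k}^*c_{n,k}$ and the commutation of $T$ with this sum; the former is supplied by the uniform bound on partial sums via Lemma~\ref{2.3} and Remark~\ref{rem}, and the latter simply by the continuity of $T$. The polarization identity, the operator inequality $c_{n,k}^*c_{n,k}\le2(a_na_n^*+b_n^*b_n)$, and the reduction to Lemma~\ref{Xu} are then routine.
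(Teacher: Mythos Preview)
Your proof is correct and follows essentially the same approach as the paper's: both use Lemma~\ref{2.6} to factor $x_n=a_nb_n$, apply the polarization identity to reduce to positive sequences, and then invoke Lemma~\ref{Xu}. The only cosmetic difference is that the paper bounds $\bignorm{\sum_n c_{n,k}^*c_{n,k}}_p$ via the triangle inequality in $L^{2p}(\M;\ell^2_c)$ using (\ref{SF}), whereas you use the equivalent operator inequality $c_{n,k}^*c_{n,k}\le 2(a_na_n^*+b_n^*b_n)$; both yield the same bound of $4$.
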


\begin{proof}
As in the proof of Proposition \ref{6.2}, let
$(x_n)_{n\geq1}$ be a sequence in $L^p(\M)$ such that $\|(x_n)_{n\geq 1}\|_{L^p(\tiny\M;\ell^1)}<1$, and let  
$(a_n)_{n\geq1}$ and $(b_n)_{n\geq1}$ in $L^{2p}(\M)$ such that 
$x_n=a_n b_n$ for any $n\geq 1$ and (\ref{Norm<1}) holds.
 
For any $n\geq 1$, we use the polarization identity,
$$
a_nb_n=\frac{1}{4}\sum_{k=0}^3 (-i)^k(a_n^*+i^kb_n)^*(a_n^*+i^kb_n).
$$ 
For $0\leq k\leq3$ and $n\geq1$, let $y_n^k:= (a_n^*+i^kb_n)^*(a_n^*+i^kb_n)$. 
Then $y_n^k\geq 0$ hence $T(y_n^k)\geq 0$. This implies, by Lemma \ref{Xu}, that for any $k$, 
$$
\norm{(T(y_n^k))_n}_{L^p(\tiny\N;\ell^1)} = \Bignorm{
\sum_{n=1}^\infty T(y_n^k)}_p\leq\norm{T}\Bignorm{
\sum_{n=1}^\infty y_n^k}_p.
$$
(The convergence of the series are justified by the next lines.)

Moreover
\begin{align*}
\Bignorm{\sum_{n=1}^\infty y_n^k}_p
& = \Bignorm{\sum_{n=1}^\infty E_{n1}\otimes (a_n^*+i^kb_n)}_{L^{2p}(\tiny\M;\ell^2_c)}^2\\
& \leq \biggl(\Bignorm{\sum_{n=1}^\infty E_{n1}\otimes a_n^*}_{L^{2p}(\tiny\M;\ell^2_c)}\, +\,
\Bignorm{\sum_{n=1}^\infty E_{n1}\otimes b_n}_{L^{2p}(\tiny\M;\ell^2_c)}\biggr)^2\\
&\leq 4
\end{align*}
by (\ref{SF}) and (\ref{Norm<1}).

Since
$$
T(x_n)=\frac{1}{4}\sum_{k=0}^3(-i)^kT(y_n^k),
$$
we deduce that
$$
\|(T(x_n))_n\|_{L^p(\tiny\N;\ell^1)}
\leq\frac{1}{4}\sum_{k=0}^3\|(T(y_n^k))_n\|_{L^p(\tiny\N;\ell^1)}\leq 
4\norm{T}.
$$
This shows that $\|T\|_{\ell^1}\leq4\|T\|$.
\end{proof}

\bigskip
\noindent{\bf Acknowledgements.} 
The first named author is supported by the French 
``Investissements d'Avenir" program, 
project ISITE-BFC (contract ANR-15-IDEX-03).
This project was carried out during a visit
of the second named author at the
``Laboratoire de Math\'ema-\-tiques de Besan\c con" (LmB).
She wishes to thank the LmB
for hospitality and support. Finally the authors thank the
referee for his/her suggestions and careful reading.

\nocite{*}

\bibliographystyle{alpha}

\end{document}